\newcommand{\rank}{\operatorname{rank}}
\newcommand{\im}{\operatorname{im}}
\newcommand{\N}{\mathbb{N}}
\newcommand{\R}{\mathbb{R}}
\DeclareRobustCommand{\stirling}{\genfrac\{\}{0pt}{}}
\DeclarePairedDelimiter\abs{\lvert}{\rvert}%
\newtheorem{thm}{Theorem}[section]
\newtheorem{lem}[thm]{Lemma}
\newtheorem{prop}[thm]{Propostion}
\newtheorem{cor}[thm]{Corollary}
\theoremstyle{definition}
\newtheorem{definition}[thm]{Definition}
\let\oldmarginpar\marginpar
\renewcommand\marginpar[1]{\-\oldmarginpar[\raggedleft\footnotesize #1]%
{\raggedright\footnotesize #1}}
\title{Random ubiquitous transformation semigroups}
\author{Julius Jonu\v{s}as}
\address{Julius Jonu\v{s}as, Mathematical Institute, University of St Andrews, St Andrews, KY16 9SS, UK.}
\email{julius.jonusas@st-andrews.ac.uk}
\author{Sascha Troscheit}
\address{Sascha Troscheit, Department of Pure Mathematics, University of Waterloo, 200 University Avenue West, Waterloo, N2L 3G1, Canada.}
\email{stroscheit@uwaterloo.ca}
\thanks{We want to thank J.~D.~Mitchell for providing the experimental data} 
\thanks{ST was initially supported by EPSRC DTG EP/K503162/1}
\begin{document}
\maketitle

\begin{abstract}
  A \emph{smallest generating set} of a semigroup is a generating set of the
  smallest cardinality. Similarly, an \emph{irredundant generating set} $X$ is
  a generating set such that no proper subset of $X$ is also a generating set.
  A semigroup $S$ is \emph{ubiquitous} if every irredundant generating set of
  $S$ is of the same cardinality. 
  
  We are motivated by a na\"{i}ve algorithm to find a small generating set for
  a semigroup, which in practice often outputs a smallest generating set. We
  give a sufficient condition for a transformation semigroup to be ubiquitous
  and show that a transformation semigroup generated by $k$ randomly chosen
  transformations asymptoticly satisfies the sufficient condition. Finally, we
  show that under this condition the output of the previously mentioned
  na\"{i}ve algorithm is irredundant. 
\end{abstract}

\section{Introduction}

A generating set $X$ of a semigroup $S$ is a \emph{smallest generating set},
also known as \emph{minimum generating set}, if every subset of $S$ with
cardinality strictly smaller than $|X|$ does not generate $S$. The size of a
smallest generating set is known as the \emph{rank of $S$}. Similarly, an
\emph{irredundant generating set for $S$} is a generating set $X$ such that no
proper subset of $X$ is a generating set for $S$. Of course, the notions of a
smallest generating set, irredundant generating set, and the rank have a
natural interpretation for groups and other algebraic objects. The question of
finding a smallest generating set or a rank is a classical one, see for
example~\cite{Arvind:2006aa,Papadimitriou:1996aa} in the case of quasigroups
and~\cite{Gomes:1987aa,Gomes:1992aa,Gray:2014aa} in the case of semigroups.
However, from a computational perspective this is, in general, not an easy
problem. In particular, there is no known efficient algorithm to find the rank
of a given $S$, besides examining most of its subsets. As such, fast
na\"{i}ve algorithms are sometimes used to obtain small, but not necessarily
smallest, generating sets. The simplest of them is Algorithm~\ref{algorithm:1}.

\begin{algorithm}\label{algorithm:1}
  \SetKwInOut{Input}{Input}\SetKwInOut{Output}{Output}
  \Input{A list $S$ of all the elements of a semigroup}
  \Output{A generating set $X$}
  \BlankLine
  $X \longleftarrow \varnothing$\;
  \While{$|\langle X \rangle| \neq |S|$}{
    \For{$s \in S$}{
      \If{$s \notin \langle X \rangle $}{
        $X \longleftarrow X \cup \{ s \}$ \;
      }
    }
  }
  \caption{Greedy}
\end{algorithm}

The algorithm applies to both groups and semigroups. The advantages of the
Greedy algorithm are its speed and that it requires no \textit{a priori}
knowledge about the object. The latter might be seen as a drawback
if some structural information is known.
For semigroups this algorithm can be improved by taking into account its
$\mathcal{J}$-class structure.

In order to define the next algorithm, we require some notation. Let $S$ be a
semigroup and let $1$ be a symbol which is not in $S$. Define $S^1 = S \cup
\{1\}$ to be a semigroup such that for all $x, y \in S$ the product $x \cdot y$
in $S^1$ is the same as the product in $S$, and $x \cdot 1 = 1 \cdot x = x$ for
all $x \in S^1$. 
It is
routine to verify that the operation $[\cdot]$ on $S^1$ is associative. If $A, B
\subseteq S^1$ define $A x = \{ a \cdot x : a \in A\}$ and similarly define $x
A$ and $A x B$. Define a relation on $S$ by 
\[
  x \leq y \quad \text{if and only if} \quad S^1 x S^1 \subseteq S^1 y S^1.
\]
Then $\leq$ is reflexive and transitive, however it might fail to be
antisymmetric. In other words, $\leq$ is a \emph{preorder} on $S$. Clearly, the relation
\[
  a \mathcal{J} b \quad \text{ if and only if} \quad a \geq b \; \text{and} \; a \leq b
\]
is an equivalence relation on $S$ and the preorder $\leq$ induces a partial
order on the equivalence classes of $\mathcal{J}$, which we will also denote by
$\leq$ if the distinction is clear from the context. We say that the list
$s_{1, 1}, \ldots, s_{1, n_1}, s_{2, 1}, \ldots, s_{2, n_2} \ldots s_{k, n_k}$
of all elements of $S$ is ordered according to the preorder $\leq$ if $\{s_{i,
1}, \ldots, s_{i, n_i}\}$ is an equivalence class of $\mathcal{J}$ for all $i$
and if $s_{i, k} \leq s_{j, m}$ then $i \leq j$. Using this idea we can state
Algorithm~\ref{algorithm:2}.

\begin{algorithm}\label{algorithm:2}
  \SetKwInOut{Input}{Input}\SetKwInOut{Output}{Output}
  \Input{A semigroup $S$} 
  \Output{A generating set $X$}
  \BlankLine
  $L \longleftarrow$ order elements of $S$ according to the preorder $\leq$\; 
  $X \longleftarrow Greedy(L)$\; 
  \caption{SmallGeneratingSet, (implemented in \emph{Semigroups}~\cite{Mitchell2016aa} for GAP~\cite{:2017aa})}
\end{algorithm}

If $S$ is a group, then $S^1 x S^1 = S$ for every $x \in S$. Hence there is a
single $\mathcal{J}$-class in $S$ and so any permutation of elements of $S$ is
ordered according to the preorder~$\leq$, and so the SmallGeneratingSet
algorithm does not perform any better than Greedy. For proper semigroups the algorithm is
particularly useful if the $\mathcal{J}$-class structure is known in advance,
for example if the semigroup was enumerated using the \emph{Froidure-Pin
algorithm}~\cite{Froidure:1997fr,Jonusas:2017aa} or algorithms appearing
in~\cite{East:aa}. It is easy to come up with examples for which
SmallGeneratingSet might return a generating set which is not a smallest
generating set or even an irredundant generating set, for example any
non-trivial finite group $G$. Even though the algorithm is na\"{i}ve, it
performs surprisingly well in practice. For instance, we ran the
SmallGeneratingSet algorithm on all $836\,021$ semigroups (up to
(anti-)isomorphism) of size $7$, available in
\emph{SmallSemi}~\cite{Distler:2016aa}. In all cases the generating set found
was a smallest generating set.

Let $n \in \N$ and let $\mathcal{T}_n$ be the \emph{transformation monoid} on
$n$ points, that is the set of all functions from $\{1, \ldots, n\}$ to itself.
The set $\mathcal{T}_n$ forms a semigroup under the composition of functions. In
the following table, we consider every subgroup of $\mathcal{T}_3$ up to
conjugation. Observe that --- for most of them --- the size of the generating
set output by SmallGeneratingSet is equal to the rank or is one greater.
\begin{table}[h]
\begin{minipage}{.6\linewidth}
\begin{center}
\caption{Subsemigroups of $\mathcal{T}_3$}
\begin{tabular}{ c || c | c | c | c | c | c | c }
      & \multicolumn{7}{ c}{Size of the output} \\ 
    Rank  & \multicolumn{1}{c}{1}
          & \multicolumn{1}{c}{2}
          & \multicolumn{1}{c}{3}
          & \multicolumn{1}{c}{4}
          & \multicolumn{1}{c}{5}
          & \multicolumn{1}{c}{6}
          & \multicolumn{1}{c}{7}  \\ \hline\hline
    1 & 7 & 3  & 1  & 0  & 0  & 0  & 0 \\ \cline{2-8}
    2 & - & 32 & 25 & 11 & 3  & 1  & 0 \\ \cline{2-8} 
    3 & - & -  & 38 & 50 & 23 & 9  & 2 \\ \cline{2-8}
    4 & - & -  & -  & 23 & 28 & 6  & 6 \\ \cline{2-8} 
    5 & - & -  & -  & -  & 5  & 7  & 2 \\ 
\end{tabular}
\end{center}
\end{minipage}%
\end{table}

The main motivation for this paper is to provide mathematical justification as
to why SmallGeneratingSet algorithms often returns a smallest generating set.
In order to do so, we consider properties of transformation semigroups picked
at random, in a certain way. We say that a semigroup $S$ is \emph{ubiquitous}
if every irredundant generating of $S$ is also a smallest generating set.
Alternatively, if $r$ is the rank of $S$, then $S$ is ubiquitous if every
irredundant generating set is of size $r$.

First we will provide a sufficient condition for a transformation semigroup to
be ubiquitous.

\begin{restatable}{thm}{ubiquitous}\label{thm:ubiquitous}
  Let $S \leq \mathcal{T}_n$ and suppose that $X$ is a generating set for $S$
  such that $\rank(xyz) < \rank(y)$ for all $x, y, z \in X$. Then $S$ is
  ubiquitous.
\end{restatable}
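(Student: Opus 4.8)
The plan is to exploit the fact that in $\mathcal{T}_n$ the rank is the size of the image, so rank is submultiplicative: $\rank(fg)\le\min\{\rank(f),\rank(g)\}$ for all $f,g$, because $\im(fg)$ is a one-step image contained in both $\im(f)$ and $\im(g)$. I would first pin down the top layer. Let $m=\max\{\rank(s):s\in S\}$; since $X$ generates $S$ and rank is submultiplicative, $m$ is also the largest rank occurring in $X$. If $s\in S$ has rank $m$ and $s=x_1\cdots x_k$ with $x_i\in X$, then every $x_i$ has rank $\ge m$, hence exactly $m$. I claim $k\le 2$: if $k\ge 3$ then $\rank(s)\le\rank(x_1x_2x_3)<\rank(x_2)=m$ by the hypothesis, a contradiction. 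Thus every rank-$m$ element of $S$ is a product of at most two rank-$m$ generators.

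From this I extract a generating-set-independent structure on the top $\mathcal J$-layer. Write $T=\{s\in S:\rank(s)=m\}$. First, any product of three elements of $T$ has rank $<m$: each factor is a nonempty word of length $\le2$ in rank-$m$ generators, so the product is a word of length $\ge3$ in such generators, and the leading triple already forces the rank below $m$. Next, if $t\in T$ and $t=ab$ in $S$, then $a,b\in T$ (again by submultiplicativity), and in fact both are \emph{indecomposable} in $S$: if, say, $a=a_1a_2$ then $a_1,a_2\in T$ and $t=a_1a_2b$ would be a product of three elements of $T$, forcing $\rank(t)<m$. Let $I$ be the set of indecomposable elements of $T$. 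The above shows $T\subseteq I\cup\{ab:a,b\in I\}$, so $I$ generates $T$. Since every indecomposable element must lie in every generating set, $I\subseteq Y$ for each generating set $Y$; and if $t\in T$ is decomposable then $t=ab$ with $a,b\in I\subseteq Y$ and $a,b\neq t$, so $t$ is redundant. Hence every irredundant generating set $Y$ satisfies $Y\cap T=I$ exactly, contributing the fixed number $|I|$.

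I would then try to peel off this top layer and induct on $m$ (or on $|S|$), using the chain of ideals $S_{\le r}=\{s\in S:\rank(s)\le r\}$; note $S':=S_{<m}$ is an ideal, and any irredundant generating set decomposes as $Y=I\sqcup(Y\cap S')$. The goal becomes showing $|Y\cap S'|$ is the same for all irredundant $Y$. The hard part, and the main obstacle, is that this is not the ordinary ubiquity of $S'$: because elements of $I$ have rank $m$ and can produce low-rank elements, $Y\cap S'$ need not generate $S'$ on its own, so one is forced to analyse \emph{relative} generation of $S$ over the fixed base $I$. The naive reduced generating set $(X\cap S')\cup I$ need not satisfy the triple-rank hypothesis: for a triple $xyz$ with $x\in I$ of rank $m$ and $y$ of low rank, multiplying on the left by a high-rank element can preserve the image size of $y$, so one may only get $\rank(xyz)\le\rank(y)$ rather than strict inequality. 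The plan is therefore to formulate the inductive statement as a relative-ubiquity assertion modulo $\langle I\rangle$ and to isolate the correct class of reduced generators — those genuinely needed beyond $\langle I\rangle$ — for which the strict triple inequality does persist, so that the layer-by-layer count over the ideal chain is forced and the sizes add up to a quantity independent of $Y$.
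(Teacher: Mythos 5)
Your analysis of the top rank layer is correct: submultiplicativity of rank together with the triple-rank hypothesis does force every rank-$m$ element of $S$ to be a word of length at most two in rank-$m$ generators, and your conclusion that every irredundant generating set meets the top layer in exactly the set $I$ of indecomposable elements is sound. But the proof stops there. Everything after ``I would then try to peel off this top layer and induct'' is a statement of intent, not an argument: you yourself observe that $Y \cap S'$ need not generate the ideal $S' = \{s \in S : \rank(s) < m\}$, that the reduced set $(X \cap S') \cup I$ need not satisfy the triple-rank hypothesis, and that some ``relative ubiquity modulo $\langle I \rangle$'' is needed --- and then you leave precisely that, which is the entire content of the theorem beyond the base case, unproved. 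This is a genuine gap, and it is the hard part: without a mechanism that forces the counts in the lower layers to agree, nothing so far rules out two irredundant generating sets of different sizes.

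For comparison, the paper resolves exactly the difficulty you flag, and it never needs the rank hypothesis to ``persist'' to any reduced or relative generating set. Three ingredients. First, it stratifies $S$ by its $\mathcal{D}$-classes (finer than rank layers) listed along a linear extension of the $\mathcal{D}$-order; the inductive hypothesis is the relative statement you were looking for: writing $X_k, Y_k$ for the intersections of the two irredundant sets with the union of the top $k$ classes, one maintains $\lvert X_k \rvert = \lvert Y_k \rvert$ \emph{and} $\langle X_k \rangle = \langle Y_k \rangle$. Second, the rank hypothesis is applied only to $X$ (made irredundant without loss of generality), yielding the key structural fact (Corollary~\ref{cor:different-ranks2}): no product $pxuys$ with $x, y \in X$, $x \mathcal{D} y$, lies in $D_x$; hence any word in $X$ representing an element of a $\mathcal{D}$-class $D$ uses at most one letter from $X \cap D$. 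For $Y$ only irredundancy is ever used. Third, within each layer the counts are matched by an exchange argument: take $t$ maximal with $X' \subseteq X \cap D_{k+1}$, $Y' \subseteq Y \cap D_{k+1}$, $\lvert X' \rvert = \lvert Y' \rvert = t$ and $\langle X_k, X' \rangle = \langle Y_k, Y' \rangle$; if some $y \in Y \cap D_{k+1} \setminus Y'$ remained, the ``one letter per class'' fact gives $y = p_1 \cdots p_m x s_1 \cdots s_l$ with a single $x \in X \cap D_{k+1} \setminus X'$ and $p_i, s_i \in X_k$, whence $x$ and $y$ generate one another over $X_k$, and swapping $x$ for $y$ contradicts the maximality of $t$. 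Note that your rank layers are genuinely too coarse for this: your own example of $ab \in T$ with $a, b \in I$ shows a word can contain two letters from one rank layer while lying in it, so the ``at most one letter'' statement that drives the exchange only holds per $\mathcal{D}$-class. Completing your proposal would essentially mean rediscovering this $\mathcal{D}$-class exchange.
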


Even though we restrict our attention to transformation semigroups in this paper,
Theorem~\ref{thm:ubiquitous} can be generalised to include semigroups of
partial bijections as well. 
We follow the approach of Cameron~\cite{Cameron:2013aa} of choosing a random
transformation semigroup. That is for some $k \geq 1$ we choose $k$
transformations of degree $n$  with uniform probability and consider the
semigroup generated by them. We show that most transformation semigroups are
ubiquitous.

\begin{thm}\label{thm:main}
  Let $k \geq 1$, and let $\mathbb{P}_k(n)$ be the probability that for $x_1,
  \ldots, x_k \in \mathcal{T}_n$, chosen with uniform probability, the
  semigroup $\langle x_1, \ldots, x_k \rangle$ is ubiquitous. Then
  $\mathbb{P}_k(n) \to 1$ as $n \to \infty$ exponentially fast.
\end{thm}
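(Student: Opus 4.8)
The plan is to reduce the statement to a large-deviation estimate using Theorem~\ref{thm:ubiquitous}. By construction $X=\{x_1,\dots,x_k\}$ generates $S=\langle x_1,\dots,x_k\rangle$, so Theorem~\ref{thm:ubiquitous} guarantees that $S$ is ubiquitous as soon as $\rank(x_ix_jx_l)<\rank(x_j)$ holds for all $i,j,l$. Since $\rank$ is non-increasing under composition we always have $\rank(x_ix_jx_l)\le\rank(x_j)$, so the hypothesis can only fail through an equality. Moreover
\[
  \rank(x_ix_jx_l)\ \le\ \rank(x_jx_l)\ \le\ \rank(x_j),
\]
so any failing triple $(i,j,l)$ satisfies $\rank(x_jx_l)=\rank(x_j)$; equivalently, $x_l$ restricted to $\im(x_j)$ is injective. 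As this event does not involve $x_i$, I obtain
\[
  1-\mathbb{P}_k(n)\ \le\ \sum_{j,l=1}^{k}\mathbb{P}\big[\,x_l\text{ is injective on }\im(x_j)\,\big],
\]
and because $k$ is fixed it suffices to show that each of the $k^2$ summands is $e^{-\Omega(n)}$.

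To estimate a single summand I would first record the concentration of $\rank(x_j)$: it equals the number of non-empty boxes when $n$ balls are thrown uniformly into $n$ boxes, hence it concentrates about $(1-1/e)n$, and standard occupancy large deviations give $\mathbb{P}[\rank(x_j)\le\delta n]\le e^{-c(\delta)n}$ for every fixed $\delta<1-1/e$. Fix $\delta$ in this range. When $l\neq j$ I condition on $x_j$ and use that $x_l$ is an independent uniform transformation: the conditional probability that $x_l$ is injective on a prescribed set of size $r$ equals $n^{-r}\prod_{m=0}^{r-1}(n-m)\le e^{-r(r-1)/2n}$, which on the event $\rank(x_j)\ge\delta n$ is at most $e^{-\delta^2 n/2+O(1)}$; combining this with the concentration bound gives $e^{-\Omega(n)}$. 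When $l=j$ the event becomes that $x_j$ maps $\im(x_j)$ injectively, i.e.\ that $x_j$ permutes its own image; the number of such transformations is $\sum_{r}\binom{n}{r}r!\,r^{\,n-r}$, and a routine entropy estimate shows this to be $e^{-\Omega(n)}n^n$, so the corresponding probability is again exponentially small.

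Summing the $k^2$ estimates yields $\mathbb{P}_k(n)\ge 1-k^2e^{-cn}$ for some $c>0$ depending only on the chosen $\delta$, which is the claimed exponentially fast convergence. I expect the main obstacle to be probabilistic rather than algebraic: one must extract genuinely exponential rates, and two points require care. First, the falling-factorial injectivity bound degrades for atypically small values of $\rank(x_j)$, so the occupancy large-deviation estimate is essential to control that regime. Second, the diagonal case $l=j$ cannot be treated by conditioning on an independent factor, and instead calls for the direct enumeration of the transformations that permute their image, whose exponential decay rests on the entropy computation rather than on independence.
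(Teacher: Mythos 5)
Your argument is correct, and it takes a genuinely different --- and considerably shorter --- route than the paper. The paper splits the failure of the hypothesis of Theorem~\ref{thm:ubiquitous} into three cases via Lemma~\ref{lem:case-split}, computes the resulting probabilities $\mathbb{G}_n$, $\mathbb{T}_n$, $\mathbb{V}_n$ exactly as sums over Stirling numbers (Lemmas~\ref{lem:1-lower-bound} and~\ref{lem:prob-equal-ranks}, and the analogous formula for $\mathbb{V}_n$), and then bounds them by optimising explicit functions of one, two and three variables (Lemmas~\ref{lem:auxilary-func}, \ref{lem:prob-approximation}, \ref{lem:bound-V} and~\ref{lem:bound-G}), with the Lambert $W$ function entering at every stage. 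Your key observation --- that $\rank(x_ix_jx_l)\le\rank(x_jx_l)\le\rank(x_j)$ forces any failing triple to satisfy $\rank(x_jx_l)=\rank(x_j)$, an event involving only the ordered pair $(x_j,x_l)$ --- removes the first coordinate altogether and collapses the paper's $\mathbb{T}_n$- and $\mathbb{V}_n$-cases into a single two-variable event; for $j\ne l$, independence, the falling-factorial bound $\prod_{m=0}^{r-1}(1-m/n)\le e^{-r(r-1)/2n}$, and a standard occupancy large-deviation estimate for $\rank(x_j)$ give an exponential bound with no exact enumeration at all (any fixed $\delta \in (0, 1-1/e)$ works, and for $\delta=0.4$, say, even the crude counting bound $\binom{n}{\lfloor\delta n\rfloor}\delta^n$ replaces McDiarmid). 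Your diagonal case $j=l$ is literally the paper's event that $\langle x_j\rangle$ is a group, so your ``routine entropy estimate'' is exactly the content of Lemma~\ref{lem:1-lower-bound}, Lemma~\ref{lem:auxilary-func} and the proposition following them: one must verify $\sup_{x\in(0,1)}\bigl(-x+(1-x)\log\tfrac{x}{1-x}\bigr)=\Omega-1<0$, which is the one place where ``routine'' conceals a real (if standard) computation, though a cruder bound suffices if one does not care about the rate. The trade-off between the two approaches: the paper's heavier machinery yields explicit decay rates (e.g.\ $\mathbb{G}_n$ decays at rate $1-\Omega$), while your reduction sacrifices explicit constants but eliminates the entire multivariate analysis of Lemmas~\ref{lem:prob-approximation}, \ref{lem:bound-V} and~\ref{lem:bound-G}, shortening the asymptotics section substantially.
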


Even though SmallGeneratingSet does not return an irredundant generating set in
general, we show that under the assumptions of Theorem~\ref{thm:ubiquitous} the
output is irredundant. Hence the final result of the paper is as follows.

\begin{thm}\label{thm:smallgen}
  Let $k \geq 1$, and let $\mathbb{W}_k(n)$ be the probability that for $x_1,
  \ldots, x_k \in \mathcal{T}_n$, chosen with uniform probability,
  SmallGeneratingSet returns a smallest generating set for a semigroup $\langle
  x_1, \ldots, x_k \rangle$. Then $\mathbb{W}_k(n) \to 1$ as $n \to \infty$
  exponentially fast.
\end{thm}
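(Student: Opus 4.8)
The plan is to derive Theorem~\ref{thm:smallgen} from Theorem~\ref{thm:main} together with the following deterministic fact, which I will call $(\ast)$: whenever $S \le \mathcal{T}_n$ is generated by a set $X$ with $\rank(xyz) < \rank(y)$ for all $x, y, z \in X$, the set returned by SmallGeneratingSet (Algorithm~\ref{algorithm:2}) is irredundant. Granting $(\ast)$, the theorem follows quickly. Under the displayed hypothesis, Theorem~\ref{thm:ubiquitous} guarantees that $S$ is ubiquitous, so every irredundant generating set of $S$ is already a smallest one; combined with $(\ast)$, the output of SmallGeneratingSet is then a smallest generating set. Consequently the event that the random generators $x_1, \ldots, x_k$ satisfy $\rank(x_i x_j x_l) < \rank(x_j)$ for all $i, j, l$ is contained in the event that SmallGeneratingSet returns a smallest generating set, so $\mathbb{W}_k(n)$ is at least the probability of the former.

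The proof of Theorem~\ref{thm:main} proceeds by verifying exactly that this latter probability tends to $1$ exponentially fast, since establishing the sufficient condition of Theorem~\ref{thm:ubiquitous} is the natural route to certifying ubiquity; extracting that estimate gives $\mathbb{W}_k(n) \to 1$ exponentially fast. Should the bound not already be stated in this form, I would reprove it directly: the rank of a product of three independent uniformly random transformations is strictly smaller than that of the middle factor with probability $1 - O(c^n)$ for some $c < 1$, and a union bound over the $k^3$ triples drawn from $\{x_1, \ldots, x_k\}$ keeps the total failure probability exponentially small.

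It remains to prove $(\ast)$, which is where the work lies. The starting observation is that $\rank$ is non-increasing under multiplication, so if $s = z_1 \cdots z_m$ then $\rank(s) \le \min_i \rank(z_i)$; in particular an element can only be written as a product of elements whose ranks are all at least its own. Two consequences are used repeatedly. Writing $r = \max_{x \in X} \rank(x)$, any product of at least three generators has rank strictly below $r$, so the elements of maximal rank are precisely the generators of rank $r$ together with the length-two products of such generators that happen not to drop rank. Moreover the preorder $\le$ refines the rank, so Algorithm~\ref{algorithm:2} meets the maximal $\mathcal{J}$-classes first; equivalently, when the greedy pass first reaches an element $y$, every element of rank exceeding $\rank(y)$ has already been examined. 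Writing $Y$ for the output, I would then argue by induction on decreasing rank that $Y$ is irredundant. An element $y \in Y$ of rank $\rho$ can only be recovered from $Y \setminus \{y\}$ using factors of rank at least $\rho$; the factors of rank strictly greater than $\rho$ all lie in the part of $Y$ controlled by the inductive hypothesis and, precisely because $y$ was added, do not generate $y$ by themselves. Hence the only remaining danger is that $y$ is produced with the help of elements of $Y$ of rank exactly $\rho$ that the greedy pass added after $y$.

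Controlling this last case is the main obstacle, and it is here that the full strength of $\rank(xyz) < \rank(y)$ is needed rather than the weaker fact that products eventually drop rank. The sandwiching condition says that multiplying any generator on both sides pushes it out of its $\mathcal{J}$-class, so inside a fixed rank-$\rho$ class only very short products survive, and the analysis reduces to the rectangular (Rees matrix) structure of that class. I expect to reuse the structural description of the maximal $\mathcal{J}$-classes from the proof of Theorem~\ref{thm:ubiquitous}: under the hypothesis, an element of a rank-$\rho$ class selected by the greedy pass cannot be expressed through the remaining selected elements of the same class, so no selected element becomes redundant once the later same-rank elements are added. The delicate part is the bookkeeping across several incomparable $\mathcal{J}$-classes of equal rank, and the interaction between the $\mathcal{R}$- and $\mathcal{L}$-order of a class and the order in which the greedy pass happens to meet its elements; I would organise this by the same Green's-relations argument that underlies Theorem~\ref{thm:ubiquitous}, so that the irredundancy of the output and the ubiquity of $S$ emerge as two facets of one structural fact.
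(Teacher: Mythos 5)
Your overall reduction is exactly the paper's: the paper proves a lemma identical to your $(\ast)$, deduces (Corollary~\ref{cor:smallesgen}) that under the rank hypothesis SmallGeneratingSet returns a smallest generating set, and then notes that the lower bound $1 - k\mathbb{G}_n - k(k-1)\mathbb{T}_n - k(k-1)(k-2)\mathbb{V}_n$ derived for $\mathbb{P}_k(n)$ in the proof of Theorem~\ref{thm:main} holds verbatim for $\mathbb{W}_k(n)$. So the probabilistic half of your argument is sound, and you correctly see that one must extract the estimate from inside the proof of Theorem~\ref{thm:main}, since ubiquity by itself does not make the algorithm's output irredundant. The genuine gap is in your proof of $(\ast)$, which is where essentially all the content of this theorem lives. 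First, your organising principle is false: the list fed to Greedy is only required to respect the $\mathcal{J}$-preorder, and $\mathcal{J}$-incomparable classes may appear in either order, so it is not true that ``when the greedy pass first reaches an element $y$, every element of rank exceeding $\rank(y)$ has already been examined''; an induction on decreasing rank therefore does not get started. (This part is repairable: by Lemma~\ref{lem:product-above} only factors $\mathcal{J}$-above or $\mathcal{J}$-equivalent to $y$ can occur in a product equal to $y$, and the strictly-above ones do precede $y$ in the list.) Second, and decisively, the case you yourself call ``the main obstacle''---that $y$ might be generated with the help of same-class elements selected after $y$---is never ruled out; you assert it follows from ``the same Green's-relations argument that underlies Theorem~\ref{thm:ubiquitous}'', but give no argument. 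That assertion \emph{is} the lemma; everything before it is routine.

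For comparison, here is how the paper closes that case. Let $x_1, \ldots, x_m$ be the output in order of selection, let $I$ be an irredundant generating subset of the output, and suppose $x_i \notin I$. Since $x_i \notin \langle x_1, \ldots, x_{i-1} \rangle$ and every factor of a product over $I$ equal to $x_i$ is $\geq x_i$ (Lemma~\ref{lem:product-above}), some factor is an $x_j \in I$ with $j > i$ and $x_j \mathcal{D} x_i$; choose $i$ maximal with this property. Because $x_i \mathcal{D} x_j$ and $I$ generates $S$, one can write $x_j = a_1 \cdots a_{k_a}\, x_i\, b_1 \cdots b_{k_b}$ and $x_i = c_1 \cdots c_{k_c}\, x_j\, d_1 \cdots d_{k_d}$ with all letters in $I$, and since $x_j \notin \langle x_1, \ldots, x_i \rangle$, some letter among the $a$'s and $b$'s is an $x_k$ with $k > i$, which forces $x_k \mathcal{D} x_i$. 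Substituting the second expression into the first exhibits $x_j$ as a product of the form $p x_k u x_j s$ or $p x_j u x_k s$ with $p, u, s \in S^1$ and $x_k \mathcal{D} x_j$, contradicting Corollary~\ref{cor:different-ranks2}, which forbids any element of $D_{x_j}$ from being written with two factors from that $\mathcal{D}$-class. Your sandwich observation (multiplying a generator on both sides pushes it out of its class) is the right ingredient, but it has to be run through this one-occurrence-per-class statement together with the maximal choice of $i$; without that, $(\ast)$ is unproved. A minor further point: your fallback union bound treats all $k^3$ triples as triples of independent uniform transformations, which fails when indices repeat---this is precisely why the paper needs Lemma~\ref{lem:case-split} and the three separate estimates $\mathbb{G}_n$, $\mathbb{T}_n$, $\mathbb{V}_n$.
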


Here we only look at the asymptotic behaviour of transformation semigroups,
however the same question can be investigated for any other infinite family of
semigroups, for example symmetric inverse monoids on $\{1,\ldots, n\}$, or
binary relations on $n$ points. 

\section{Preliminaries}\label{section:pre}

In this section we give the definitions and notation needed in the remainder of
the paper.

\begin{definition}\label{defn:GreensRels}
Let $S$ be a semigroup and let $x, y \in S$. The \emph{Green's relations}
$\mathcal{L}$, $\mathcal{R}$, $\mathcal{J}$, and $\mathcal{D}$ are the following
equivalence relations on $S$: 
\begin{align*}
  x \mathcal{L} y \quad &\text{if and only if} \quad S^1 x = S^1 y\\ 
  x \mathcal{R} y \quad &\text{if and only if} \quad x S^1 = y S^1\\
  x \mathcal{J} y \quad &\text{if and only if} \quad S^1 x S^1 = S^1 y S^1
\end{align*}
and $\mathcal{D}$ is the smallest equivalence relation containing both
$\mathcal{L}$ and $\mathcal{R}$.
\end{definition}

Let $x \in S$. Then $L_x$, $R_x$, and $D_x$ denote the equivalences classes of
$\mathcal{L}$, $\mathcal{R}$, and $\mathcal{D}$, respectively, containing $x$.
If $S$ is finite, then $\mathcal{D} = \mathcal{J}$, for a proof
see~\cite{Howie:1995fk}. Since we are only interested in finite semigroups
we will not make any distinction between the $\mathcal{D}$
and $\mathcal{J}$ relations.

Throughout the paper, we write elements of $\mathcal{T}_n$ on the right of
their argument and we write functions from a subset of $\mathbb{R}^n$ to
$\mathbb{R}$ on the left. This is done in agreement with two different
notations prevalent in algebra and analysis.

Let $f \in \mathcal{T}_n$, and let $A \subseteq \{1, \ldots, n\}$. Then 
\[
  (A)f = \{(a)f : a \in A\}
\]
and the \emph{image of $f$} is the set $\im(f) = (\{1, \ldots, n\})f$. A
\emph{transversal of $f$} is a set $\mathfrak{T} \subseteq \{1, \ldots, n\}$
such that $f$ is injective on $\mathfrak{T}$ and $(\mathfrak{T})f = \im(f)$.
The \emph{rank of $f$} is $\rank(f) = |\im(f)| = |\mathfrak{T}|$, where
$\mathfrak{T}$ is a transversal of $f$. The \emph{kernel of $f$}, denoted by
$\ker(f)$, is the equivalence relation defined by
\[
  (x, y) \in \ker(f) \quad \text{if and only if} \quad (x)f = (y)f.
\]
Hence a \emph{kernel class of $f$} containing $x \in \{1, \ldots, n\}$ is the set 
\[
  \{y \in \{1, \ldots, n\} : (y)f = (x)f \}. 
\] 

Using the above definition we can state a classical result describing Green's
classes of transformation semigroups. The proof is easy and thus omitted. 

\begin{lem}\label{lem:greens-transformation}
  Let $S \leq \mathcal{T}_n$, and let $f, g \in S$. Then
  \begin{enumerate}[label = \rm{(\roman*)}]
    \item if $f \mathcal{L} g$ then $\im(f) = \im(g)$; 

    \item if $f \mathcal{R} g$ then $\ker(f) = \ker(g)$; 

    \item if $f \mathcal{D} g$ then $\rank(f) = \rank(g)$.
  \end{enumerate}
\end{lem}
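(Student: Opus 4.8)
The plan is to prove parts (i) and (ii) directly from the definitions of $\mathcal{L}$ and $\mathcal{R}$, and then deduce (iii) by combining them with a short chain argument. Throughout I would keep the right-action convention firmly in mind: for $s \in S^1$ and $h \in S$ the product $sh$ acts by $(x)(sh) = ((x)s)h$, so left multiplication feeds the output of $s$ into $h$, whereas $hs$ satisfies $(x)(hs) = ((x)h)s$, so right multiplication post-composes $h$ with $s$. This asymmetry is precisely why $\mathcal{L}$ controls images while $\mathcal{R}$ controls kernels.

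For (i), if $f \mathcal{L} g$ then $S^1 f = S^1 g$, so in particular $f \in S^1 g$ and $g \in S^1 f$; write $f = sg$ and $g = tf$ with $s, t \in S^1$. From $f = sg$ I would compute $\im(f) = (\{1, \ldots, n\})sg = ((\{1, \ldots, n\})s)g \subseteq (\{1, \ldots, n\})g = \im(g)$, and symmetrically $\im(g) \subseteq \im(f)$ from $g = tf$, giving equality. (The case $s = 1$ or $t = 1$ is harmless, since $(\{1, \ldots, n\})1 = \{1, \ldots, n\}$.) For (ii), if $f \mathcal{R} g$ then $f S^1 = g S^1$, so $f = gs$ and $g = ft$ for some $s, t \in S^1$. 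From $f = gs$ I would observe that whenever $(x)g = (y)g$ we have $(x)f = ((x)g)s = ((y)g)s = (y)f$, so $\ker(g) \subseteq \ker(f)$; the reverse inclusion follows in the same way from $g = ft$, yielding $\ker(f) = \ker(g)$.

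For (iii), I would first record the elementary fact that the rank of any $h \in \mathcal{T}_n$ equals both $|\im(h)|$ and the number of kernel classes of $h$, since $h$ induces a bijection between its kernel classes and the points of its image. Consequently part (i) shows that $\mathcal{L}$-related elements have equal rank (equal images), and part (ii) shows that $\mathcal{R}$-related elements have equal rank (equal kernels, hence equally many kernel classes). Since $\mathcal{D}$ is the smallest equivalence relation containing $\mathcal{L} \cup \mathcal{R}$, it equals the transitive closure of that union, so $f \mathcal{D} g$ produces a finite chain $f = h_0, h_1, \ldots, h_m = g$ in which each consecutive pair is $\mathcal{L}$- or $\mathcal{R}$-related. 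Rank is preserved at every step, whence $\rank(f) = \rank(g)$.

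Since the statement is elementary, I expect no serious obstacle; the only points demanding care are matching the direction of multiplication to the right-action convention so that the inclusions in (i) and (ii) come out the correct way, and noting in (iii) that it is the \emph{number} of kernel classes — not the kernel as a set — that records the rank.
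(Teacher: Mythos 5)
Your proof is correct, and it matches the argument the paper has in mind: the paper explicitly omits the proof as easy, and your argument (left multiplication $f = sg$ gives $\im(f) \subseteq \im(g)$, right multiplication $f = gs$ gives $\ker(g) \subseteq \ker(f)$, then a chain of $\mathcal{L}$- and $\mathcal{R}$-steps for $\mathcal{D}$ with rank preserved since $\rank(h)$ equals both $\abs{\im(h)}$ and the number of kernel classes) is precisely the standard one. You also correctly handle the right-action convention and the identification of $\mathcal{D}$ with the transitive closure of $\mathcal{L} \cup \mathcal{R}$, so there is nothing to fix.
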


For $n, r\in \mathbb{N}$ such that $r \leq n$, define $\mathcal{A}(n,r)$ to be
the set of partitions of $\{1,\ldots,n\}$ into $r$ non-empty components. 

\begin{lem}\label{lem:idempotent-numbers}
  Let $n, r \in \mathbb{N}$ such that $r \leq n$. Then
  \[
    \sum_{\{A_1, \ldots, A_r\} \in \mathcal{A}(n,r)}\;\; \prod_{i = 1}^r |A_i| =
    \binom{n}{r} r^{n - r}.
  \]
\end{lem}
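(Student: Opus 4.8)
The plan is to prove the identity by a bijective double-counting argument. The key observation is that the left-hand side has a natural combinatorial interpretation: for a fixed partition $\{A_1,\ldots,A_r\}$, the product $\prod_{i=1}^r |A_i|$ counts the number of ways to distinguish one \emph{marked} element in each block. Summing over all partitions in $\mathcal{A}(n,r)$, the left-hand side therefore counts the number of pairs $(\{A_1,\ldots,A_r\}, m)$ in which $\{A_1,\ldots,A_r\}$ is a partition of $\{1,\ldots,n\}$ into $r$ nonempty blocks and $m$ selects one marked element from each block.

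First I would set up a bijection between such marked partitions and pairs $(T, f)$, where $T \subseteq \{1,\ldots,n\}$ satisfies $|T| = r$ and $f \colon \{1,\ldots,n\}\setminus T \to T$ is an arbitrary function. Given a marked partition, let $T$ be the set of the $r$ marked elements (one per block, so that $|T| = r$) and define $f$ by sending each unmarked point to the marked element of its block. Conversely, given $(T,f)$, I would recover the partition by declaring, for each $t \in T$, a block $\{t\} \cup \{x : (x)f = t\}$ with marked element $t$. Since every block contains its own marked element, the blocks are nonempty, there are exactly $r$ of them, and the two constructions are mutually inverse.

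It then remains to count the pairs $(T,f)$: there are $\binom{n}{r}$ choices for the $r$-element set $T$, and since $|T| = r$ while $\{1,\ldots,n\}\setminus T$ has $n-r$ elements, there are $r^{\,n-r}$ choices for $f$. This yields $\binom{n}{r} r^{\,n-r}$, as required. The argument is mostly routine; the only point needing care is verifying that the correspondence really is a bijection — in particular that the reconstructed blocks are nonempty and that exactly $r$ of them arise — which follows from each block retaining its marked element. Alternatively, one could bypass the bijection and compute an exponential generating function: a single size-weighted block contributes $\sum_{k\geq 1} k\,\frac{x^k}{k!} = x e^x$, so $r$ unordered blocks contribute $\frac{1}{r!}(x e^x)^r = \frac{x^r e^{rx}}{r!}$, and extracting the coefficient of $x^n/n!$ again recovers $\binom{n}{r} r^{\,n-r}$.
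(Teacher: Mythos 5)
Your proof is correct and is essentially the paper's own argument in different clothing: the paper counts rank-$r$ idempotents of $\mathcal{T}_n$ in two ways, and such an idempotent is exactly your pair $(T,f)$ (image plus the map on its complement), while its kernel classes together with their fixed points are exactly your marked partition. So your bijection between marked partitions and pairs $(T,f)$ is precisely the paper's double count, just phrased without the semigroup terminology (your EGF remark is a genuinely different, also valid, alternative, but it is only an aside).
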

\begin{proof}
  A function $f \in \mathcal{T}_n$ is called idempotent if $f^2 = f$. We prove
  the lemma by finding the number of idempotent transformation of
  $\mathcal{T}_n$ of rank $r$ in two ways. Denote this number by $N$. It can be
  shown that $f$ is an idempotent if and only if $(x)f = x$ for all $x \in
  \im(f)$.

  If $f \in \mathcal{T}_n$ is an idempotent of rank $r$, then there are
  $\binom{n}{r}$ choices for the $\im(f)$ and for every point in
  $\{1,\ldots,n\} \setminus \im(f)$ there are $r$ choices in $\im(f)$ to map to. Hence
  \[
    N = \binom{n}{r} r^{n - r}.
  \]

  On the other hand, the sets $A_1, \ldots, A_r$ are the kernel classes of $f
  \in \mathcal{T}_n$ if and only if $\{A_1, \ldots, A_r\} \in
  \mathcal{A}(n,r)$. If $f$ is an idempotent and $A_1, \ldots, A_r$ are kernel
  classes of $f$ then $(A_i)f \in A_i$ for all $i \in \{1, \ldots, r\}$, and so
  there are $\prod_{i = 1}^r |A_i|$ choices for the $\im(f)$. Hence
  \[
    N = \sum_{\{A_1, \ldots, A_r\} \in \mathcal{A}(n,r)} \prod_{i = 1}^r |A_i|,
  \]
  as required.
\end{proof}

Since $|\mathcal{A}(n, r)| = \stirling{n}{r}$, the following easy upper bound
for the Stirling numbers is an immediate consequence of
Lemma~\ref{lem:idempotent-numbers}.
\begin{cor}\label{lem:stirling-upper}
  Let $n, r \in \mathbb{N}$ be such that $r \leq n$. Then
  \[
    \stirling{n}{r} \leq \binom{n}{r} r^{n - r}.
  \]
\end{cor}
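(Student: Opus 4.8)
The plan is to compare the sum appearing in Lemma~\ref{lem:idempotent-numbers} against its number of summands. First I would recall that, by definition of the Stirling numbers of the second kind, $\stirling{n}{r} = |\mathcal{A}(n,r)|$, the number of partitions of $\{1,\ldots,n\}$ into $r$ non-empty components. This quantity is precisely the number of terms in the sum
\[
  \sum_{\{A_1,\ldots,A_r\}\in\mathcal{A}(n,r)} \prod_{i=1}^r |A_i|
\]
evaluated in the lemma.

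The key observation is that every component $A_i$ of a partition in $\mathcal{A}(n,r)$ is non-empty, so $|A_i|\geq 1$ for each $i$, and therefore every product $\prod_{i=1}^r |A_i|$ is at least $1$. Bounding each summand below by $1$ then gives
\[
  \stirling{n}{r} = \sum_{\{A_1,\ldots,A_r\}\in\mathcal{A}(n,r)} 1 \;\leq\; \sum_{\{A_1,\ldots,A_r\}\in\mathcal{A}(n,r)} \prod_{i=1}^r |A_i|,
\]
and Lemma~\ref{lem:idempotent-numbers} identifies the right-hand side as $\binom{n}{r} r^{n-r}$, which is exactly the desired inequality.

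There is essentially no obstacle here, as the statement is deliberately an immediate consequence: the whole argument is the trivial term-by-term bound $\prod_{i=1}^r |A_i|\geq 1$ combined with the identity already proved. The only point worth checking is that the two descriptions of the index set agree, namely that $\stirling{n}{r}$ counts precisely the partitions being summed over, which is immediate from the definition of $\mathcal{A}(n,r)$ given just before the statement.
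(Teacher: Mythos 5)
Your proof is correct and matches the paper's intended argument: the paper declares the bound an immediate consequence of Lemma~\ref{lem:idempotent-numbers} via $|\mathcal{A}(n,r)| = \stirling{n}{r}$, and the term-by-term bound $\prod_{i=1}^r |A_i| \geq 1$ you spell out is precisely the implicit step.
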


We will make use of Stirling's approximation formula
\[
  \sqrt{2\pi}n^{n + \frac{1}{2}}e^{-n} \leq n! \leq \sqrt{2\pi}n^{n +
  \frac{1}{2}}e^{-n + \frac{1}{2n}}.
\]
If $F : \mathbb{R} \to \mathbb{R}$, then we say that $G \in O(F)$
if there are $c > 0$ and $x_0 \in \mathbb{R}$ such that $\lvert G(x) \rvert \leq c
\lvert F(x) \rvert$ for all $x \geq x_0$. Then Stirling's formula can be
written as follows
\[
  \log n! =n\log n - n + O(\log(n)).
\]

Let $\mathbb{R}^+ = \{x \in \mathbb{R} : x > 0\}$. The final notion
required in this paper is the function $W : \mathbb{R}^+ \to \mathbb{R}^+$
defined so that 
\[
  x = W(x) e^{W(x)}
\]
for all $x \in \mathbb{R}^+$. Since the function $x \mapsto x e^x$ is strictly
increasing on $\mathbb{R}^+$, it follows that $W(x)$ is a well-defined function
on $\mathbb{R}^+$. In the literature $W(x)$ is known as \emph{Lambert W
function} or \emph{product logarithm}, see e.g.\ \cite{Corless96}. The value $\Omega = W(1)$ is known as
the \emph{omega constant} and it satisfies $\Omega e^\Omega = 1$, with the
numerical value $\Omega = 0.5671439\dots$\,.

\section{Sufficient condition for ubiquitous semigroups}

In this section we prove Theorem~\ref{thm:ubiquitous}. We will do so in a
series of lemmas. The first of which is the following easy observation about
products in the $\mathcal{D}$-classes. Recall that if $x, y \in S$, then by
$D_x$ we denote the $\mathcal{D}$-class containing $x$ and $x \leq y$ if and
only if $S^1 x S^1 \subseteq S^1 y S^1$.

\begin{lem}\label{lem:product-above}
  Let $S$ be a semigroup, and let  $z_1 \cdots z_m \in D_x$ where $x, z_1,
  \ldots, z_m \in S$. Then $x \leq z_i \cdots z_j$ under the preorder on $S$
  for all $i,j \in \{1, \ldots, m\}$ with $i \leq j$.
\end{lem}
\begin{proof}
  Let $x, z_1, \ldots, z_m \in S$ be such that $z_1 \cdots z_m \in D_x$. Then 
  \[
    S^1 x S^1 = S^1 z_1 \cdots z_m S^1 \subseteq S^1z_i \cdots z_jS^1,
  \]
  and so $x \leq z_i \cdots z_j$ by definition for all $i,j \in \{1, \ldots,
  m\}$ with $i \leq j$.
\end{proof}

Next we give a condition for a semigroup $S$ which restricts allowed products
in a given $\mathcal{D}$-class.

\begin{lem}\label{lem:different-ranks}
  Let $S \leq \mathcal{T}_n$, let $X$ be a generating set for $S$, and let $x
  \in X$ be such that $\rank(y_1xy_2) < \rank(x)$ for all $y_1, y_2 \in X$
  where $y_1, y_2 \geq x$. Then only the following products
  \[
    x, \quad y_1 \cdots y_m, \quad xy_1 \cdots y_m, \quad \text{or} \quad y_1
    \cdots y_m x
  \]
  where $m \geq 1$, $y_1, \ldots, y_m \in X\setminus \{x\}$ and $y_1, \ldots,
  y_m \geq x$ can be in $D_x$.
\end{lem}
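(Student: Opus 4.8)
The plan is to take an arbitrary element $w \in D_x$, write it as a product of generators $w = g_1 \cdots g_k$ with $g_i \in X$ (possible since $X$ generates $S$), and show that the only words that do not force $\rank(w) < \rank(x)$ are those of the four listed shapes. First I would record two facts valid for every such word. Since $w \in D_x$, Lemma~\ref{lem:greens-transformation} gives $\rank(w) = \rank(x) =: r$, and Lemma~\ref{lem:product-above}, applied to the one-letter subwords, gives $g_i \geq x$ for all $i$. Thus every letter occurring in $w$ is $\geq x$, matching the constraint in the statement.

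The easy structural step forbids an occurrence of $x$ strictly inside the word. If $g_j = x$ with $1 < j < k$, then $g_{j-1}\, x\, g_{j+1}$ is a contiguous factor of $w$ with $g_{j-1}, g_{j+1} \in X$ and $g_{j-1}, g_{j+1} \geq x$, so the hypothesis yields $\rank(g_{j-1} x g_{j+1}) < r$; since rank cannot increase under multiplication, $\rank(w) \leq \rank(g_{j-1} x g_{j+1}) < r$, contradicting $\rank(w) = r$. Hence $x$ can occur only in position $1$ or position $k$. If $x$ does not occur we are in the case $y_1 \cdots y_m$, and if it occurs exactly once (necessarily at an end) we are in one of the cases $x$, $x y_1 \cdots y_m$, or $y_1 \cdots y_m x$. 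It remains to rule out $x$ occurring at \emph{both} ends, i.e.\ $w = x u x$ with $u = g_2 \cdots g_{k-1}$ a (possibly empty) product of letters $\neq x$.

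This is the main obstacle: when $x$ sits at both ends, neither occurrence has a generator on both sides, so the sandwich argument does not apply directly. My plan is to manufacture an interior sandwich by squaring, arguing entirely with images and kernels. First, $r = \rank(w) \leq \rank(xu) \leq \rank(x) = r$, so $\rank(xu) = r$; since $\ker(x) \subseteq \ker(xu)$ and both relations have exactly $r$ classes, $\ker(xu) = \ker(x)$, which forces $u$ to be injective on $\im(x)$. Similarly $\im(w) = (\im(xu))x \subseteq \im(x)$ with equal cardinality gives $\im(w) = \im(x)$. Consequently $(xu)^2 = wu$ has image $(\im w)u = (\im x)u$ of size $r$, so $\rank((xu)^2) = r$; and since $(xu)^2 = x(uxu)$ we get $\rank(uxu) \geq \rank((xu)^2) = r$, whence $\rank(uxu) = r$. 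But when $u$ is non-empty, $uxu$ contains the contiguous factor $g_{k-1}\, x\, g_2$ with $g_{k-1}, g_2 \in X$ and $g_{k-1}, g_2 \geq x$, so the hypothesis gives $\rank(uxu) \leq \rank(g_{k-1} x g_2) < r$, a contradiction.

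The remaining degenerate case is $u$ empty, i.e.\ $w = x^2 \in D_x$. Here $\rank(x^2) = r$ forces $x$ to be injective on $\im(x)$ (its image has size $r$), and then $\rank(x^3) = r$ as well; but applying the hypothesis with $y_1 = y_2 = x$ gives $\rank(x^3) < r$, again a contradiction. Combining the three cases shows that an element of $D_x$ can be written only in one of the four listed forms. I expect the bookkeeping of the both-ends case --- in particular extracting $\rank((xu)^2) = r$ purely from rank and kernel counting, rather than invoking stability or the egg-box structure --- to be the only delicate point; everything else is a direct application of Lemmas~\ref{lem:product-above} and~\ref{lem:greens-transformation} together with the fact that rank is non-increasing along products.
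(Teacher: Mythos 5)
Your proof is correct, and its skeleton matches the paper's: every letter of a word lying in $D_x$ is $\geq x$, the letter $x$ can occur only at the two ends, and the both-ends word $xux$ (including the degenerate $x^2$) must then be excluded. The execution of both main steps, however, differs. For excluding interior occurrences of $x$, the paper works with the preorder: it first proves $x^2 < x$ and $y_1xy_2 < x$, and then invokes Lemma~\ref{lem:product-above} (every contiguous subproduct of a word in $D_x$ is $\geq x$) to forbid these as subwords; your argument --- an interior $x$ creates a factor $g_{j-1}xg_{j+1}$ of rank $< \rank(x)$, while $\rank(w) = \rank(x)$ and rank is non-increasing along products --- is equivalent but more direct, and it absorbs the $x^2$-subword case that the paper handles separately. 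The genuine divergence is the case $w = xux$. The paper first forces the exact equality $xux = x$ by a multiplier argument in $S^1$ (any $a, b$ with $axuxb = x$ must both be $1$, since otherwise the resulting word over $X$ has an already-forbidden shape), and then contradicts that equality by analysing $\im(y_m x)$. You avoid the multiplier step entirely: from $\rank(xu) = \rank(x)$ you get injectivity of $u$ on $\im(x)$ by kernel counting, deduce $\im(w) = \im(x)$, hence $\rank\left((xu)^2\right) = \rank(uxu) = \rank(x)$, and then the sandwich $g_{k-1}xg_2$ --- the very same sandwich the paper applies the hypothesis to, namely $y_m x y_1$ --- yields the contradiction. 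Your route stays entirely within rank/kernel/image bookkeeping on transformations and sidesteps the paper's slightly delicate ``unless $a = b = 1$'' claim, at the cost of some extra counting; the paper's route reaches the contradiction faster once its form analysis is in place. Both arguments are sound.
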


\begin{proof}
  First observe that if $x^2 \in D_x$, then both $x$ and $x^2$ have the same
  rank by Lemma~\ref{lem:greens-transformation}, in other words $\abs{\im(x)} =
  \abs{\im(x^2)}$. However, since $x$ is a finite degree transformation and
  $\im(x^2) \subseteq \im(x)$, it follows that $\im(x) = \im(x^2)$, and so $x$
  acts as a bijection on $\im(x)$. Hence $\rank(x^3) = \rank(x)$, contradicting
  the hypothesis of the lemma. Therefore $x^2 \notin D_x$, and since $S^1 x^2
  S^1 \subseteq S^1 x S^1$, it follows that $x^2 < x$ under the preorder on
  $S$. Similarly, for every $y_1, y_2 \in X$ such that $y_1, y_2 \geq x$, it follows from
  Lemma~\ref{lem:greens-transformation} that $y_1 x y_2 < x$, since $\rank(y_1
  x y_2) < \rank(x)$ and $S^1 y_1 x y_2 S^1 \subseteq S^1 x S^1$.
  
  Let $z_1, \ldots, z_m \in X$ be such that $z_1 \cdots z_m \in D_x$. Then  $x
  \leq z_i$ for all $i$ by Lemma~\ref{lem:product-above}. Hence there are $k
  \in \N$, $n_1, \ldots, n_{k}, m_2, \ldots, m_{k} \geq 1$, and $m_1, m_{k + 1}
  \geq 0$ such that
  \[
    z_1 \cdots z_m = y_{1,1} \cdots y_{1, m_1} x^{n_1} y_{2, 1} \cdots  y_{k,m_k} x^{n_k}
    y_{k + 1, 1} \cdots y_{k + 1, m_{k + 1}}
  \]
  where $y_{i,j} \in X \setminus \{x\}$ and $y_{i, j} \geq x$ for all $i$ and
  $j$. Here we are assuming that
  \[
    m = \sum_{i = 1}^k n_i + \sum_{i = 1}^{k + 1} m_i,
  \]
  $z_1 = y_{1, 1}$, $z_2 = y_{1, 2}$, and so on. Again by
  Lemma~\ref{lem:product-above}, if $z = z_i \cdots z_j$ is a subproduct of
  $z_1 \cdots z_m$, then $x \leq z$. However $x^2 < x$, and thus $x^2$ is not a
  subproduct of $z_1 \cdots z_m$. That is, $n_{i} = 1$ for all $i \in \{1,
  \ldots, k\}$. Hence
  \[
    z_1 \cdots z_m = y_{1,1} \cdots y_{1, m_1} x y_{2, 1} \cdots
    x y_{k + 1, 1} \cdots y_{k + 1, m_{k + 1}}.
  \]
  In a similar fashion, if $y_{i, m_i}, y_{i + 1, 1} \in X\setminus\{x\}$ for
  $y_{i, m_i}, y_{i + 1, 1}\geq x$, then as observed above $y_{i, m_i} x y_{i +
  1, 1} < x$, and so $y_{i, m_i}xy_{i + 1, 1}$ is not a subproduct of $z_1
  \cdots z_m$. Hence $z_1 \cdots z_m$ is one of the following products  
  \[
    x, \quad y_1 \cdots y_m, \quad xy_1 \cdots y_m, \quad y_1
    \cdots y_m x, \quad \text{or} \quad x y_1
    \cdots y_m x
  \]
  where $m \geq 1$, $y_1, \ldots, y_m \in X\setminus \{x\}$ and $y_1, \ldots,
  y_m \geq x$. Hence it remains to show that $x y_1 \cdots y_m x \notin D_x$. 

  Suppose that $x y_1 \cdots y_m x \in D_x$ for some $m \geq 1$,  $y_1, \ldots,
  y_m \in X \setminus \{x\}$ such that $y_1, \ldots, y_m \geq x$. Then there
  are $a, b \in S^1$ such that $a x y_1 \cdots y_m x b = x$. Note that unless
  $a = b = 1$, the product $a x y_1 \cdots y_m x b$ is not in one of the above
  forms, and so cannot be in $D_x$. Hence $a = b = 1$, and thus $x y_1 \cdots
  y_m x = x$. Which is only possible if $y_1 \ldots y_m$ acts bijectively on
  $\im(x)$. Thus $y_1$ acts bijectively on $\im(x)$. If $\im(y_m x) = \im(x)$,
  then it follows that 
  \[
    \rank(y_m x y_1) = \abs{\im(y_m x y_1)} = \abs{\im(y_m x)} = \abs{\im(x)} =
    \rank(x),
  \]
  which contradicts the hypothesis of the lemma. Hence $\im(y_m x) \subsetneq
  \im(x)$. However, it then follows that 
  \[
    \rank(x y_1 \cdots y_m x) \leq |\im(y_m x)| < \rank(x),
  \]
  contradicting $x y_1 \cdots y_m x = x$. Therefore $x y_1 \cdots y_m x \notin
  D_x$ for all $m \geq 1$ and all $y_1, \ldots, y_m \in X \setminus \{x\}$ such
  that $y_1, \ldots y_m \geq x$, as required. 
\end{proof}

We prove a corollary in the case where the generating set is irredundant. 

\begin{cor}\label{cor:different-ranks2}
  Let $S \leq \mathcal{T}_n$, let $X$ be an irredundant generating set for $S$,
  and let $x \in X$ be such that $\rank(z_1xz_2) < \rank(x)$ for all $z_1, z_2
  \in X$ where $z_1, z_2 \geq x$. Then $pxuys \notin D_x$ for all $p, u, s \in
  S^1$ and any $x, y \in X$ such that $x \mathcal{D} y$.
\end{cor}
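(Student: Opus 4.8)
The plan is to argue by contradiction and split on whether $y = x$. If $y = x$, then expanding $p$, $u$, $s$ as words in $X$ writes $pxuys$ as a word representing an element of $D_x$ in which $x$ occurs at least twice. Since the hypothesis imposed on $x$ is precisely the hypothesis of Lemma~\ref{lem:different-ranks}, that lemma forbids any word lying in $D_x$ from containing $x$ more than once, so this case is immediate. For $y \neq x$ I would again expand $p$, $u$, $s$ as words in $X$, use Lemma~\ref{lem:product-above} to see that every generator occurring is $\geq x$, and then use Lemma~\ref{lem:different-ranks} to locate the unique admissible copy of $x$. As $y \neq x$ sits to its right, the displayed $x$ cannot be terminal, so it must be initial; hence $p = 1$ and $u$, $s$ are words in $X \setminus \{x\}$, that is, $xuys \in D_x$ with $u, s \in \langle X \setminus \{x\}\rangle^1$.

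The heart of the argument is to promote this to the identity $xuy = y$, and for this I would extract Green's relations using that $S$ is finite, hence stable (so $ab \mathcal{J} a \Rightarrow ab \mathcal{R} a$ and $ab \mathcal{J} b \Rightarrow ab \mathcal{L} b$). The chain $xuys \leq xuy \leq x$ with $xuys \mathcal{D} x$ squeezes $xuy$ into $D_x$, and likewise $xuy \leq uy \leq y$ with $y \mathcal{D} x$ gives $uy \in D_x$. Stability applied to $uy = u\cdot y$ and to $xuy = x\cdot(uy)$ then yields $y \mathcal{L} uy \mathcal{L} xuy$, so $xuy \mathcal{L} y$ and therefore $y = c\cdot xuy$ for some $c \in S^1$. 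Expanding $c$ as a word in $X$ exhibits $y \in D_x$ as a word whose only candidate copy of $x$ is the displayed one; by Lemma~\ref{lem:different-ranks} it must be initial, forcing $c = 1$ and hence $xuy = y$.

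To finish I would iterate this identity: $(xu)^2 y = (xu)(xuy) = xuy = y$, so $y \leq (xu)^2 \leq x$, and the squeeze with $y \mathcal{D} x$ places $(xu)^2 \in D_x$. But $(xu)^2$ is represented by the word $x\,u\,x\,u$, which contains $x$ twice (when $u = 1$ this is $x^2$, and $x^2 \notin D_x$ by the rank drop established in Lemma~\ref{lem:different-ranks}, so the conclusion $(xu)^2 \in D_x$ is already contradictory). This contradicts Lemma~\ref{lem:different-ranks}, completing the proof. I expect the main obstacle to be the middle step: extracting $xuy = y$ requires the finite-semigroup stability properties to pin down the relevant $\mathcal{L}$- and $\mathcal{R}$-classes, and care that each intermediate product genuinely lies in $D_x$ before invoking Lemma~\ref{lem:different-ranks}. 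Irredundancy of $X$ is available as a convenient alternative finish, since $xuy = y$ together with $xuy \mathcal{R} x$ forces $x \mathcal{R} y$ and hence tends to express $y$ through the remaining generators; but the word-counting contradiction via $(xu)^2$ seems the most economical route.
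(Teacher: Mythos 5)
Your proposal is correct, but it reaches the contradiction by a genuinely different (and longer) route than the paper, even though both arguments hinge on the same consequence of Lemma~\ref{lem:different-ranks}: any product of generators lying in $D_x$ contains the letter $x$ at most once, and only in initial or terminal position. The paper's proof is a two-line substitution trick with no case split, no analysis of word shapes, and no stability: if $pxuys \in D_x = D_y$, then since $\mathcal{D} = \mathcal{J}$ there are $a, b, c, d \in S^1$ with $a(pxuys)b = x$ and $c(pxuys)d = y$; substituting the second expression for the displayed $y$ inside the first produces a product of generators equal to $x$ --- hence lying in $D_x$ --- in which the letter $x$ occurs twice, an immediate contradiction, uniformly in whether $y = x$ or not. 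You instead treat $y = x$ separately, then for $y \neq x$ use the allowed word shapes to force $p = 1$ and $u, s \in \langle X \setminus \{x\}\rangle^1$, invoke stability of finite semigroups to get $xuy \mathcal{L} uy \mathcal{L} y$, extract the exact identity $xuy = y$, and square it to place $(xu)^2$ in $D_x$ as a word with two occurrences of $x$. All of your steps check out: the squeezes $xuys \leq xuy \leq x$ and $xuy \leq uy \leq y$ do land $xuy$ and $uy$ in $D_x$, the stability facts are valid for finite semigroups (though the paper never states them, using only $\mathcal{D} = \mathcal{J}$), and the final word $xuxu$ (or $xx$ when $u = 1$) is indeed forbidden. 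Your route buys extra structural information --- the normal form $p = 1$ and the relation $xuy = y$ --- at the cost of a case split and of importing stability; note that stability is avoidable even within your argument, since $xuy \mathcal{J} y$ already gives $c(xuy)d = y$ for some $c, d \in S^1$, the same word argument forces $c = 1$, and iterating $y = xuyd = (xu)^2yd^2$ yields $y \leq (xu)^2 \leq x$ and the same contradiction. Finally, as you implicitly noticed, neither your proof nor the paper's genuinely uses the irredundancy of $X$: Lemma~\ref{lem:different-ranks}, whose hypothesis does not mention irredundancy, carries the whole argument.
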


\begin{proof}
  Let $Y = \{z \in X\setminus \{x\} : z \geq x \}$. It follows from
  Lemma~\ref{lem:different-ranks} that every product of elements of $X$ which
  is in $D_x$ is of the form $x$, $z$, $xz$, or $zx$ for some product $z$ of
  elements of $Y$. In particular, $x \notin Y$ as $X$ is irredundant, and so
  $x$ occurs at most once in the product. If $x, y \in X$, $x\mathcal{D}y$, and
  $pxuys \in D_x = D_y$ for some $p, u, s \in S^1$, then there are $a, b, c, d
  \in S^1$ such that  
  \[
    axsyb = x \quad \text{and} \quad cxsyd = y.
  \]
  Hence $axscxsydb = x \in D_x$, but $x$ occurs twice in the product, which is
  a contradiction. 
\end{proof}

Finally, we prove Theorem~\ref{thm:ubiquitous}. Observe that if a
transformation semigroup $S \leq \mathcal{T}_n$ is such that all irredundant
generating sets have the same cardinality, then every irredundant generating
set is a smallest generating set. 

\ubiquitous*

\begin{proof}
  Let $X' \subseteq X$ be irredundant. Then $\rank(xyz) < \rank(y)$ for all $x,
  y, z \in X'$. It is sufficient to show that every irredundant generating set
  is of the same cardinality. Moreover, without loss of generality we may
  assume that $X$ is irredundant and show that every irredundant generating set
  is of size $\abs{X}$.

  Let $Y$ be an irredundant generating set for $S$. Let $\leq_d$ be a total
  order defined on $\mathcal{D}$-classes of $S$ such that if $D$ and $D'$ are
  $\mathcal{D}$-classes of $S$ and $D \leq D'$ under the partial order of
  $\mathcal{D}$-classes, then $D \leq_d D'$. Let $\{D_1, \ldots, D_d\}$ be the
  set of all $\mathcal{D}$-classes of $S$, indexed so that $D_d <_d \ldots <_d
  D_1$. For $k \in \{1, \ldots, d\}$, define 
  \[
    X_k = X \cap \left( \bigcup_{i = 1}^k D_i\right) \quad \text{and} \quad Y_k
    = Y \cap \left( \bigcup_{i = 1}^k D_i\right).
  \]
  Let $k \geq 1$ and let $z \in D_{k}$. By Lemma~\ref{lem:product-above} if
  $x_1 \cdots x_m \in D_k$ where $x_1, \ldots, x_m \in X$, then $z \leq x_i$,
  and so there is $j \leq k$ so that $x_i \in D_j$ for all $i \in \{1, \ldots,
  m\}$.
   In other words,
  \begin{equation}\label{equation:1}
    x_1 \cdots x_m \in D_k \; \text{where} \; x_1, \ldots, x_m \in X \implies x_i \in X_k
    \; \text{for all} \; i \in \{1, \ldots, m\}.
  \end{equation}
  The same argument applies to $Y$, and so 
  \begin{equation}\label{equation:2}
    D_k \subseteq \langle X_k \rangle \quad \text{and} \quad D_k \subseteq
    \langle Y_k \rangle
  \end{equation}
  for all $k \geq 1$.

  By the definition of the total order $\leq_d$, the $\mathcal{D}$-class
  $D_{1}$ is maximal, and so both $X$ and $Y$ intersect $D_1$ non-trivially.
  For any $i \geq 2$
  and $x_1, \ldots, x_i \in X_1$, it follows from
  Corollary~\ref{cor:different-ranks2} that $ x_{1} \cdots x_{i} \notin
  D_{x_1} = D_{1}$, and so $D_{1} = X_1$. Hence $Y_1 \subseteq X_1$ and
  $D_1 \cap \langle Y_1 \rangle = Y_1$. However, since $D_1$ is a maximal
  $\mathcal{D}$-class, $D_1 \subseteq \langle Y_1 \rangle$, and thus $X_1 = D_1
  \subseteq Y_1$. In other words, $X_1 = Y_1 = D_1$.
  
  For $k \geq 1$, suppose that $\abs{X_k} = \abs{Y_k}$ and $\langle X_k \rangle
  = \langle Y_k \rangle$. If $X \cap D_{k + 1} = \varnothing$, then $D_{k + 1}
  \subseteq \langle X_k \rangle = \langle Y_k \rangle$. Hence $X_{k + 1} = X_k$
  and $Y_{k + 1} = Y_k$, and thus $\abs{X_{k + 1}} = \abs{Y_{k + 1}}$ and
  $\langle X_{k + 1} \rangle = \langle Y_{k + 1} \rangle$.

  Suppose that $X \cap D_{k + 1} \neq \varnothing$. Then $D_{k + 1}
  \not\subseteq \langle X_k \rangle = \langle Y_k \rangle$, and so $Y \cap D_{k
  + 1} \neq \varnothing$. Suppose that $t \geq 0$ is largest integer such that
  there is $X' \subseteq X\cap D_{k +1}$ and $Y' \subseteq Y \cap D_{k + 1}$
  with $\abs{X'} = \abs{Y'} = t$ and $\langle X_k, X' \rangle = \langle
  Y_k, Y' \rangle$. If $t = \abs{Y \cap D_{k + 1}}$ and $x \in X \cap D_{k + 1}
  \setminus X'$, then 
  \[
    x \in D_{k + 1} \subseteq \langle Y_{k + 1} \rangle =  \langle Y_k, Y'
    \rangle = \langle X_k, X' \rangle,
  \]
  by \eqref{equation:2}. However, this is impossible, since $X$ is irredundant
  and $x \notin X_k \cup X' \subseteq X$. Hence if $t = \abs{Y \cap D_{k + 1}}$
  then $X' = X \cap D_{k + 1}$, or in other words $X_{k + 1} = X_k \cup X'$ and
  $Y_{k + 1} = Y_k \cup Y'$. Therefore, $\abs{X_{k + 1}} = \abs{X_k} + t =
  \abs{Y_k} + t = \abs{Y_{k + 1}}$ and $\langle X_{k + 1} \rangle = \langle
  Y_{k + 1} \rangle$. We will now show that $t = \abs{Y \cap D_{k + 1}}$.

  Suppose that $t < \abs{Y \cap D_{k + 1}}$. Then there is $y \in Y \cap D_{k +
  1} \setminus Y'$ and  $y$ is equal to a product of elements of $X_{k + 1}$ by
  \eqref{equation:2}. It follows from Corollary~\ref{cor:different-ranks2} that
  if $x_1 \cdots x_m \in D_{k + 1}$ where $x_1, \ldots, x_m \in X$ then there
  is at most one $i \in \{1, \ldots, m\}$ such that  $x_i \in X \cap D_{k +
  1}$, otherwise some subword of $x_1 \cdots x_m$ would not be an element of
  $D_{k + 1}$.   Since $y \notin Y_k \cup Y'$, the irredundancy of $Y$ implies
  that $y \notin \langle Y_k, Y' \rangle = \langle X_k, X' \rangle$. It follows
  that $y = p_1 \cdots p_m x s_1 \cdots s_l$ for some $x \in X \cap D_{k + 1}
  \setminus X'$, $m, l \geq 0$ and $s_i, p_i \in X_k$. Hence $y \in \langle x,
  X_k \rangle$. Since $x, y \in D_{k + 1}$, it follows that there are $a, b \in
  S^1$ such that
  \[
    a p_1 \cdots p_m x s_1 \cdots s_l b = ayb = x.
  \]
  It follows from \eqref{equation:1} and the discussion above that $a, b \in
  \langle X_k \rangle^1$, and so $x \in \langle y, X_k \rangle$. 
  Moreover 
  \[
    x \in \langle y, X_k, X' \rangle \quad \text{and} \quad y \in \langle x,
    X_k, X' \rangle.
  \]
  Therefore $\langle x, X_k, X' \rangle = \langle y, X_k, X' \rangle = \langle
  y, Y_k, Y' \rangle$, since $\langle X_k, X' \rangle = \langle Y_k, Y'
  \rangle$. However $\abs{X' \cup \{x\}} = \abs{Y' \cup \{y\}} = t + 1$, which
  contradicts the maximality of $t$. Therefore $t = \abs{Y \cap D_{k + 1}}$ and
  by the previous paragraph $\langle X_{k + 1} \rangle = \langle Y_{k + 1}
  \rangle$ and $\abs{X_{k + 1}} = \abs{Y_{k + 1}}$.

  By induction it follows that $\langle X_k \rangle = \langle Y_k \rangle$ and
  $\abs{X_k} = \abs{Y_k}$ for all $k \in \{1, \ldots, d\}$. In particular, $X_d
  = X$ and $Y_d = Y$, and thus $\abs{X} = \abs{Y}$, as required.
\end{proof}

\section{SmallGeneratingSet}

In this section we return to the motivating question about the algorithm
SmallGeneratingSet. First, we note that SmallGeneratingSet might return a
generating set which is not irredundant. For example, if the semigroup under
investigation is a group of size at least $2$, the algorithm can first pick an
identity and so return a generating set which includes an identity. However, we
show that under the assumptions of Theorem~\ref{thm:ubiquitous} the generating
set returned by SmallGeneratingSet is irredundant.

\begin{lem}
  Let $S \leq \mathcal{T}_n$ and suppose that $X$ is a generating set for $S$
  such that $\rank(xyz) < \rank(y)$ for all $x, y, z \in X$. Then
  SmallGeneratingSet returns an  irredundant generating set.
\end{lem}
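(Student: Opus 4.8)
We have a semigroup $S \leq \mathcal{T}_n$ with generating set $X$ satisfying the rank-drop condition $\rank(xyz) < \rank(y)$ for all $x,y,z \in X$. By Theorem~\ref{thm:ubiquitous}, $S$ is ubiquitous, so every irredundant generating set has the same cardinality (the rank). The SmallGeneratingSet algorithm first orders the elements of $S$ by the preorder $\leq$ (so that $\mathcal{D}$-classes appear in a linear extension of the partial order, with maximal classes first) and then runs Greedy on this ordered list. I need to show the output $Z$ of this procedure is irredundant.

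**The plan.** Let me trace how Greedy interacts with the $\mathcal{D}$-class filtration used in the proof of Theorem~\ref{thm:ubiquitous}. Write $\{D_1,\ldots,D_d\}$ for the $\mathcal{D}$-classes, ordered so that $D_1$ is maximal and the list $L$ presents all of $D_1$, then all of $D_2$, and so on. I would let $Z$ denote the generating set returned by the algorithm and set $Z_k = Z \cap (\bigcup_{i=1}^k D_i)$. The key structural fact is that, because $X$ generates $S$ and satisfies the hypothesis, $S$ itself admits the product restrictions from Lemma~\ref{lem:different-ranks} and Corollary~\ref{cor:different-ranks2}; these depend only on $S$ having \emph{some} generating set with the rank-drop property, which $X$ supplies. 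In particular, any product of elements of $S$ landing in $D_k$ is built from elements lying in $\bigcup_{i \le k} D_i$, so the elements Greedy adds while scanning $D_k$ can only be used to generate things in $D_k$ and below, never to regenerate a higher class.

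**The core argument.** I would prove irredundancy by showing that for each $k$, every element $z$ that Greedy places in $Z$ from the class $D_k$ is genuinely necessary, i.e.\ $z \notin \langle Z \setminus \{z\}\rangle$. Suppose for contradiction that $z \in D_k$ was added to $Z$ but is redundant, so $z$ is a product of elements of $Z \setminus \{z\}$. By the product restriction (equation~\eqref{equation:1} and Corollary~\ref{cor:different-ranks2}), such a product expressing $z \in D_k$ uses at most one factor from $D_k$ and all other factors from strictly lower classes $D_i$ with $i > k$ in the index ordering — but here is the decisive point: Greedy processes $D_k$ \emph{before} any lower class, so at the moment $z$ was added, no element of any lower $\mathcal{D}$-class had yet entered $Z$. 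Thus if $z$ were expressible using $Z \setminus \{z\}$, the only factors available from $D_k$ are the earlier $D_k$-elements of $Z$, and since at most one $D_k$-factor is allowed, $z$ would have to equal a product $p \, z' \, s$ with $z'$ the sole $D_k$-factor and $p,s$ from higher classes already in $Z$; but then $z \in \langle Z_{k-1} \cup \{z'\}\rangle$ with $z'$ added earlier, so $z \in \langle\{\text{elements already in } Z \text{ when } z \text{ was examined}\}\rangle$, contradicting the fact that Greedy only adds $z$ when $z \notin \langle \text{current } Z\rangle$.

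**The main obstacle.** The subtle part is handling the lower-class factors that may be added to $Z$ \emph{after} $z$: redundancy of $z$ is tested against the \emph{final} set $Z$, not the set at insertion time, so I cannot directly invoke the Greedy insertion check. The resolution is exactly the product restriction: a factorization of $z \in D_k$ cannot use any factor from a strictly lower class in a way that keeps the product inside $D_k$ — by Lemma~\ref{lem:product-above} every factor $x_i$ satisfies $z \le x_i$, forcing every factor into $\bigcup_{i\le k} D_i$. Hence later-added lower-class elements are useless for re-expressing $z$, and redundancy must be witnessed entirely within $Z_k$, which by the insertion rule lies in $\langle Z \text{ at insertion time}\rangle$. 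I would therefore structure the proof as: (i) recall the product restrictions hold for $S$ via $X$; (ii) observe Greedy adds each $z$ only when not yet generated; (iii) show any factorization of $z$ within $Z$ uses only factors from $Z_k$, i.e.\ elements present when $z$ was examined; (iv) conclude $z$ could not have been added — the contradiction — establishing irredundancy of $Z$.
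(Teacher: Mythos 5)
There is a genuine gap, and it sits exactly at the point you yourself flag as ``the main obstacle'': your argument never handles a redundancy witness whose unique same-class factor was added by Greedy \emph{after} $z$. Suppose $z \in D_k$ is redundant in the final output $Z$. By Lemma~\ref{lem:product-above} every factor of a witnessing product lies in $\bigcup_{i \leq k} D_i$, and (granting the product restriction) at most one factor lies in $D_k$ itself; so $z = p\,z'\,s$ with $z' \in Z \cap D_k$ and $p, s$ products of elements of $Z$ from strictly higher classes. All factors of $p$ and $s$ were indeed present when $z$ was examined, but nothing forces $z'$ to have been: it may occur later in the list, inside the same block $D_k$, and then there is no conflict with the insertion rule for $z$. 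Your proposed resolution --- that ``redundancy must be witnessed entirely within $Z_k$, which by the insertion rule lies in $\langle Z \text{ at insertion time}\rangle$'' --- is false for exactly this reason: $Z_k$ contains the $D_k$-elements inserted \emph{after} $z$, and by the very insertion rule you invoke, each such element is \emph{not} in the subsemigroup generated by the elements present when $z$ was examined. The product restrictions only exclude lower-class factors (which were never the problem); they say nothing about later-added factors from $D_k$ itself, and those are the whole difficulty. (Incidentally, your first description of the witness, ``all other factors from strictly lower classes $D_i$ with $i > k$'', contradicts your own later, correct, appeal to Lemma~\ref{lem:product-above}.)

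The paper closes precisely this case by a second, symmetric application of the greedy rule --- applied to the \emph{later} element rather than to $z$. In the paper's notation: if $x_i$ is redundant one obtains a factor $x_j \in D_{x_i}$ with $j > i$; since $x_i \mathcal{D} x_j$ one can write $x_j = a_1 \cdots a_{k_a} x_i b_1 \cdots b_{k_b}$ over the irredundant subset $I$, and now the greedy rule for $x_j$ (namely $x_j \notin \langle x_1, \ldots, x_i \rangle$) forces some factor $x_{\ell}$ with $\ell > i$, which by Lemma~\ref{lem:product-above} and the ordering of the list must again lie in $D_{x_i}$. Splicing the factorization of $x_i$ through $x_j$ into this expression produces a product equal to $x_j$ containing two factors from $I \cap D_{x_j}$, contradicting Corollary~\ref{cor:different-ranks2}. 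Without this second use of the selection rule (or some substitute for it), your proof does not go through. A secondary weakness: your claim that the restrictions of Corollary~\ref{cor:different-ranks2} apply to products over $Z$ merely ``because they depend only on $S$ having some generating set with the rank-drop property'' also needs justification, since the corollary is stated for elements of an irredundant generating set satisfying the rank condition, and the output elements need not a priori belong to such a set (the paper is admittedly terse on the same point); but the missing symmetric step is the essential flaw.
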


\begin{proof}
  Let $X = \{x_1, \ldots, x_m\}$ be the output of the algorithm, and assume
  that the elements were selected in the order they are listed. Suppose that $I
  \subseteq X$ is irredundant and let $x_i \in X \setminus I$. Since $x_i$ was
  selected by the algorithm, it means that 
  \[
    x_i \notin \langle x_1, \ldots, x_{i - 1} \rangle,
  \]
  and so there exists $x_j \in I$ such that  $x_i \mathcal{D} x_j$ and $j > i$, otherwise 
  $x_i \notin \langle I \rangle$.
  Without loss of generality, we can assume that $i$ is the largest integer
  such that $x_i \in X \setminus I$ and $x_i \mathcal{D} x_j$. Then there are
  $a_1, \ldots, a_{k_a}, b_1,\ldots, b_{k_b} \in I$ such that
  \[
    a_1 \cdots a_{k_a} x_i b_1 \cdots b_{k_b} = x_j.
  \]
  Since $x_j \notin \langle x_1, \ldots, x_i \rangle$, it follows that at least
  one of the $a_1, \ldots, a_{k_a}, b_1, \ldots, b_{k_b}$ is $x_k$ for some $k
  > i$. It follows from Lemma~\ref{lem:product-above} that $x_k \geq x_j$, and
  since $k > i$ implies that $x_k \nleq x_i$, we have that $x_k \mathcal{D}
  x_i$.

  Finally, there are $c_1, \ldots, c_{k_c}, d_1, \ldots, d_{k_d} \in I$ such
  that
  \[
    c_1 \cdots c_{k_c} x_j d_1 \cdots d_{k_d} = x_i,
  \]
  and so
  \[
    a_1 \cdots a_{k_a} c_1 \cdots c_{k_c} x_j d_1 \cdots d_{k_d}  b_1 \cdots
    b_{k_b} = x_j.
  \]
  Which contradicts Corollary~\ref{cor:different-ranks2} as at least one of
  $a_1, \ldots, a_{k_a}, b_1, \ldots, b_{k_b}$ is $x_k$. Therefore, $X = I$.
\end{proof}

The following result is then immediate from Theorem~\ref{thm:ubiquitous}.

\begin{cor}\label{cor:smallesgen}
  Let $S \leq \mathcal{T}_n$ and suppose that $X$ is a generating set for $S$
  such that $\rank(xyz) < \rank(y)$ for all $x, y, z \in X$. Then
  SmallGeneratingSet returns a smallest generating set.
\end{cor}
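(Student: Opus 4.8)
The plan is to combine the two results established immediately above this corollary, since the hypothesis here is verbatim the hypothesis of both. First I would invoke Theorem~\ref{thm:ubiquitous}: its assumption is exactly that $\rank(xyz) < \rank(y)$ for all $x, y, z \in X$, so it applies directly to conclude that $S$ is ubiquitous. By the definition of ubiquitous, this means every irredundant generating set of $S$ is already a smallest generating set.

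Next I would apply the preceding Lemma, whose hypothesis is again $\rank(xyz) < \rank(y)$ for all $x, y, z \in X$, to conclude that the generating set returned by SmallGeneratingSet is irredundant. Chaining the two facts, the output of SmallGeneratingSet is an irredundant generating set of a semigroup that is ubiquitous, and hence it is a smallest generating set. This is precisely the assertion of the corollary.

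Since both ingredients are in hand, I expect no genuine obstacle here; the only thing to verify is that the hypotheses line up, which they do identically in all three statements. The one subtlety worth keeping in mind is that Theorem~\ref{thm:ubiquitous} guarantees ubiquity as a property of $S$ (independent of which generating set witnesses the rank condition), so it is legitimate to apply it in tandem with the Lemma's conclusion about the specific set produced by the algorithm.
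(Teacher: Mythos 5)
Your proposal is correct and matches the paper's own argument exactly: the paper notes the corollary is ``immediate'' by combining the preceding Lemma (the output of SmallGeneratingSet is irredundant under the rank hypothesis) with Theorem~\ref{thm:ubiquitous} ($S$ is ubiquitous, so every irredundant generating set is smallest). Your remark about ubiquity being a property of $S$ independent of the witnessing generating set is a fair point of care, and it holds here just as you describe.
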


\section{Asymptotics}

The main aim of this section is to show that if for some fixed $k \geq 1$ we
choose $x_1, \ldots, x_k \in \mathcal{T}_n$ with uniform probability, then the
probability $\mathbb{P}_k(n)$ that $\langle x_1, \ldots, x_k \rangle$ is
ubiquitous and the probability $\mathbb{W}_k(n)$ that SmallGeneratingSet
returns a smallest generating set for $\langle x_1, \ldots, x_k \rangle$ both
tend to $1$ as $n$ increases.

\begin{lem}\label{lem:case-split}
  Let $X \subseteq \mathcal{T}_n$ be such that $\rank(xyz) = \rank(y)$ for some
  $x, y, z \in X$. Then one of the following holds:
  \begin{enumerate}[label = \rm(\roman*)]
    \item there is $x \in X$ such that $\langle x \rangle$ is a group;

    \item there are distinct $x, y \in X$ such that $\rank(xyx) = \rank(y)$;

    \item there are mutually distinct $x, y, z \in X$  such that $\rank(xyz) = \rank(y)$.
  \end{enumerate}
\end{lem}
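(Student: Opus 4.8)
The plan is to prove the lemma by an exhaustive case analysis on which of the three elements $x, y, z$ coincide. The hypothesis only asserts that some triple (with possible repetitions) satisfies $\rank(xyz) = \rank(y)$, so I would split according to the number of distinct elements among $x, y, z$, aiming to land in conclusion (iii) when all three are distinct, in (ii) when exactly the two outer elements agree, and in (i) in every remaining degenerate configuration.

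First I would record two elementary facts about ranks of transformations that drive the whole argument. The first is monotonicity: for any $a, b \in \mathcal{T}_n$ we have $\rank(ab) \le \min\{\rank(a), \rank(b)\}$, since $\im(ab) = (\im(a))b \subseteq \im(b)$ and $\abs{(\im(a))b} \le \abs{\im(a)}$; consequently the rank of a product never exceeds the rank of any of its factors. The second is the observation already used in the first paragraph of the proof of Lemma~\ref{lem:different-ranks}: if $\rank(w^2) = \rank(w)$ then $\im(w^2) = \im(w)$, so $w$ permutes $\im(w)$, and if that permutation has order $d$ one checks $w^{d+1} = w$, whence $\langle w \rangle = \{w, w^2, \dots, w^d\}$ is a cyclic group with identity $w^d$. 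Thus $\rank(w^2) = \rank(w)$ already forces $\langle w\rangle$ to be a group.

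With these two facts isolated the cases become short. If $x, y, z$ are mutually distinct we are immediately in case (iii). If exactly two coincide there are three possibilities. When $x = z \neq y$ the hypothesis reads $\rank(xyx) = \rank(y)$ with $x \neq y$, which is exactly (ii). When $x = y \neq z$ the product is $x^2 z$, and monotonicity gives $\rank(x) = \rank(x^2 z) \le \rank(x^2) \le \rank(x)$, forcing $\rank(x^2) = \rank(x)$; by the second fact $\langle x\rangle$ is a group, so (i) holds. The symmetric subcase $y = z \neq x$ gives $\rank(xy^2) = \rank(y)$ and, in the same way, $\rank(y^2) = \rank(y)$, so $\langle y\rangle$ is a group and (i) holds. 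Finally, if all three are equal then $\rank(x^3) = \rank(x)$ collapses to $\rank(x^2) = \rank(x)$ by monotonicity, again giving (i).

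The argument is essentially routine once the two facts are in place; the only point requiring a little care, and the place I would expect to be the main obstacle in a careless write-up, is making sure the enumeration of equality patterns is genuinely exhaustive and that the two \emph{off-diagonal} coincidences ($x = y \neq z$ and $y = z \neq x$), which do \emph{not} produce a palindromic product, are correctly funnelled into conclusion (i) rather than being mistaken for (ii) or (iii). Everything else reduces to the monotonicity inequality and the cyclic-semigroup fact borrowed from Lemma~\ref{lem:different-ranks}.
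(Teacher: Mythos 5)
Your proof is correct and follows essentially the same route as the paper: a case analysis on which of $x, y, z$ coincide, using rank monotonicity under products and the fact that $\rank(w^2)=\rank(w)$ forces $w$ to permute $\im(w)$, so that the coincidences $x=y$, $y=z$, and $x=y=z$ all land in case (i), $x=z$ gives (ii), and distinctness gives (iii). Your write-up is merely a more explicit enumeration of the same cases the paper treats (it leaves the all-distinct and $x=z$ cases implicit), so there is nothing substantive to distinguish the two arguments.
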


\begin{proof}
  Suppose that $\rank(xyz) = \rank(y)$ for some $x, y, z \in X$ and suppose
  that not all $x$, $y$, and $z$ are distinct. If $x = y = z$, then $\rank(x^3)
  = \rank(x)$, which is only possible if $x$ acts bijectively on $\im(x)$.
  However, in that case $\langle x \rangle$ is a group. Hence we only need to
  consider the case that where exactly two of $x$, $y$, and $z$ are equal.

  Suppose that $x = y$. Then $\rank(y^2z) = \rank(y)$, and since
  \[
    \rank(y) \leq \rank(y^2) \leq \rank(y^2z) = \rank(y),
  \]
  it follows that $\rank(y^2) = \rank(y)$. Hence by an argument similar to
  above $\langle y \rangle$ is a group. The case $y = z$ can be dealt with in
  an almost identical fashion. Therefore, there are distinct $x, y \in X$ such
  that $\rank(xyx) = \rank(y)$.
\end{proof}

In order to show that $\mathbb{P}_k(n) \to 1$ as $n \to \infty$, for every $n
\in \mathbb{N}$, we define three probabilities:
\begin{align*}
  \mathbb{G}_n \;& \text{is the probability that} \, \langle x \rangle \,
  \text{is a group} \, \text{where} \, x \in \mathcal{T}_n\, \text{is chosen} \\
  &\text{randomly with uniform probability} \\ 
  \mathbb{T}_n \;& \text{is the probability that} \, \rank(xyx) = \rank(y)  \,
  \text{where} \, x, y \in \mathcal{T}_n \, \text{are} \\
  &\text{chosen randomly with uniform probability} \\ 
  \mathbb{V}_n \;& \text{is the probability that} \, \rank(xyz) = \rank(z)  \,
  \text{where} \, x, y, z \in \mathcal{T}_n \\
  &\text{are chosen randomly with uniform probability}. \\ 
\end{align*}
For a fixed $k \geq 1$, if $x_1, \ldots, x_k \in \mathcal{T}_n$ are chosen
randomly with uniform probability, it follows from Lemma~\ref{lem:case-split}
that the probability that there are $x, y, z \in \{x_1, \ldots, x_k\}$ such
that $\rank(xyz) = \rank(y)$ is bounded from above
by
\[
  k \mathbb{G}_n + k(k - 1) \mathbb{T}_n + k (k - 1) (k - 2) \mathbb{V}_n.
\]
Hence by Theorem~\ref{thm:ubiquitous}
\[
  \mathbb{P}_k(n) \geq 1 - k \mathbb{G}_n  - k(k-1) \mathbb{T}_n - k (k - 1) (k
  - 2) \mathbb{V}_n,
\]
and the same lower bound hold for $\mathbb{W}_k(n)$ by
Corollary~\ref{cor:smallesgen}. Hence in order to prove Theorems~\ref{thm:main}
and~\ref{thm:smallgen} it suffices to show that $\mathbb{G}_n \to 0$,
$\mathbb{T}_n \to 0$, and $\mathbb{V}_n \to 0$ as $n \to \infty$. We will do so
in the remaining three subsections of the paper.

\subsection{$\mathbb{G}_n$ tends to zero}

We begin by obtain an expression for $\mathbb{G}_n$ in terms of $n$.

\begin{lem}\label{lem:1-lower-bound}
  Let $n \in \mathbb{N}$. Then 
  \[
    \mathbb{G}_n = \frac{n!}{n^n}\sum_{k = 0}^{n - 1} \frac{(n - k)^k}{k!}. 
  \] 
\end{lem}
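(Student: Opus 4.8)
The plan is to reduce the computation of $\mathbb{G}_n$ to an exact enumeration. The first step is to characterise those $x \in \mathcal{T}_n$ for which $\langle x \rangle$ is a group. Since $\langle x \rangle$ is a finite cyclic semigroup, it is a group exactly when its index is $1$, that is when $x^{r+1} = x$ for some $r \geq 1$, and this holds precisely when $x$ restricts to a bijection of $\im(x)$, equivalently $\rank(x^2) = \rank(x)$. This is the same observation already used in the proof of Lemma~\ref{lem:different-ranks}: the equation $x^{r+1} = x$ forces $x^r$ to fix every point of $\im(x)$, so $x|_{\im(x)}$ is invertible, and conversely a bijection of the finite set $\im(x)$ has finite order $r$, whence $x^{r+1} = x$ and the index is $1$.

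Next I would count, for each rank $r \in \{1, \ldots, n\}$, the transformations $x$ with $\rank(x) = r$ that act bijectively on $\im(x)$. Writing $C = \im(x)$, such an $x$ is specified by three independent choices: the image set $C$ with $\abs{C} = r$, giving $\binom{n}{r}$ options; the permutation $x|_C$ of $C$, giving $r!$ options; and the value of $x$ on each of the $n - r$ points outside $C$, each of which must be mapped into $C$, giving $r^{n-r}$ options. Conversely, any such triple determines a transformation with image exactly $C$ acting bijectively on $C$: one has $C \subseteq \im(x)$ because $x|_C$ is onto $C$, and $\im(x) \subseteq C$ because every point is mapped into $C$. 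Hence the number of such $x$ is $\binom{n}{r}\, r!\, r^{n-r}$.

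It then remains to assemble the answer. Summing over $r$ and dividing by $\abs{\mathcal{T}_n} = n^n$ gives $\mathbb{G}_n = n^{-n}\sum_{r=1}^n \binom{n}{r}\, r!\, r^{n-r}$. Reindexing with $k = n - r$, and using $\binom{n}{r}\, r! = n!/(n-r)! = n!/k!$ together with $r^{n-r} = (n-k)^k$, the sum becomes $n!\sum_{k=0}^{n-1}(n-k)^k/k!$, which is precisely the claimed formula $\frac{n!}{n^n}\sum_{k=0}^{n-1}\frac{(n-k)^k}{k!}$.

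The step I expect to be the main obstacle is the enumeration, and within it the subtle point that a point outside $C$ cannot be mapped to another point outside $C$ without enlarging the image beyond $C$. It is exactly this constraint that makes the three choices independent and the correspondence an \emph{exact} bijection, so that no transformation is counted twice (the set $C$ being recovered as $\im(x)$) and none is omitted. The characterisation of the group condition is routine given the observation already recorded in the proof of Lemma~\ref{lem:different-ranks}, and the final reindexing is a direct calculation.
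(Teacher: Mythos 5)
Your proof is correct and follows essentially the same route as the paper: characterise $\langle x \rangle$ being a group as $x$ acting bijectively on $\im(x)$, count such transformations of each rank $r$ via the three independent choices $\binom{n}{r}\, r!\, r^{n-r}$, divide by $n^n$, and reindex with $k = n - r$. Your additional justifications (the cyclic-semigroup index argument and the check that the counting correspondence is exact) only make explicit what the paper leaves as ``easy observations.''
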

\begin{proof}
  First observe that for any $x \in \mathcal{T}_n$, the semigroup $\langle x
  \rangle$ is a group if and only if $x$ acts as a bijection on $\im(x)$. There
  are
  \[
    \sum_{r = 1}^n \binom{n}{r} r^{n - r} r!
  \] 
  transformations $x$ such that $x$ acts bijectively on $\im(x)$. That is, if
  $|\im(x)| = r$, then there are $\binom{n}{r}$ choices for $\im(x)$, $r!$ ways
  of bijectively mapping $\im(x)$ to itself, and $r^{n - r}$ ways to map every point from
  $\{1,\ldots,n\} \setminus \im(x)$ to $\im(x)$. Since $|\mathcal{T}_n| = n^n$, the
  probability of randomly choosing $x \in \mathcal{T}_n$ such that $\langle x
  \rangle$ is a group is
  \[
    \mathbb{G}_n =  \frac{1}{n^n} \sum_{r = 1}^n \binom{n}{r} r^{n - r} r! = 
    \frac{n!}{n^n}\sum_{r = 1}^n \frac{r^{n - r}}{(n - r)!}.
  \]
  Finally, rewriting the equation using $k = n - r$ we obtain
  \[
    \frac{n!}{n^n}\sum_{r = 1}^n \frac{r^{n - r}}{(n - r)!} = 
    \frac{n!}{n^n}\sum_{k = 0}^{n - 1} \frac{(n - k)^k}{k!}, 
  \]
  as required.
\end{proof}

In order to prove that $\mathbb{G}_n \to 0$ as $n \to \infty$ we use an
auxiliary function for which we prove some analytical properties. Also recall
that $\Omega \in \mathbb{R}$ is a unique constant which satisfies $\Omega
e^\Omega = 1$.

\begin{lem}\label{lem:auxilary-func}
  Let $F:(0,1) \to \R$ be given by $F(x) = x\log(x^{-1} -1)+x$. Then $F$  has a
  unique maximum at $\alpha = \frac{\Omega}{1 + \Omega} \in (0,1)$ and
  $F(\alpha) = \Omega < 1$. 
\end{lem}

\begin{proof}
  First observe that $F(x)$ is continuous on $(0,1)$, and $F(x) \to 0$ as $x \to 0$ and  $F(x) \to -\infty$ as $x
  \to 1$. The first and second derivative are continuous and given by
  \[
    \frac{dF(x)}{dx}=1-\frac{1}{1-x}+\log(x^{-1}-1)
    \quad \text{and} \quad
    \frac{d^2F(x)}{dx^2}=-\frac{1}{(x-1)^2 x}.
  \]
  Clearly, $\frac{d^2F(x)}{dx^2}<0$ for all $x \in (0,1)$, but
  $\frac{dF(x)}{dx} \to \infty$ as $x\to0$ and so the derivative is positive in
  a neighbourhood of $0$. But $F(x)\to-\infty$ as $x\to1$ and thus $F$ has a
  unique maximum at $\alpha$ implicitly given by
  \[
    1 - \frac{1}{1 - \alpha} + \log\left(\frac{1 - \alpha}{\alpha}\right) = 0,
  \]
  or in other words
  \[
    \frac{\alpha}{1 - \alpha} = \log\left(\frac{1 - \alpha}{\alpha}\right).
  \]
  It then follows that $\frac{\alpha}{1 - \alpha} = \Omega$, by the definition
  of $\Omega$. Hence $\alpha = \frac{\Omega}{1 + \Omega}$ and
  \[
    F(\alpha) = \alpha \log\left(\frac{1 - \alpha}{\alpha}\right) + \alpha = \alpha \left( 1 +
    \frac{\alpha}{1 - \alpha}\right) = \frac{\alpha}{1 - \alpha} = \Omega.
    \qedhere
  \]
\end{proof}

Finally, we conclude this section by describing the asymptotic behaviour of
$\mathbb{G}_n$.
\begin{prop}
  The probability $\mathbb{G}_n$, that $\langle x \rangle$ is a group where $x \in
  \mathcal{T}_n$ is chosen with uniform distribution, tends to $0$
  exponentially at the rate less than $1 - \Omega$.
\end{prop}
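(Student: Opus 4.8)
The plan is to work directly from the exact formula $\mathbb{G}_n = \frac{n!}{n^n}\sum_{k = 0}^{n - 1}\frac{(n - k)^k}{k!}$ of Lemma~\ref{lem:1-lower-bound}, bounding the two factors separately and recognising the auxiliary function $F$ of Lemma~\ref{lem:auxilary-func} inside the summand. For the prefactor, Stirling's formula gives $\frac{n!}{n^n} \leq \sqrt{2\pi}\, n^{1/2} e^{-n + 1/(2n)}$, so this factor is essentially $e^{-n}$ up to a polynomial in $n$. The strategy is then to show that the sum grows at most like $e^{n\Omega}$, up to a polynomial factor, so that the two contributions combine to a net exponential decay governed by $-(1 - \Omega)$.

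To estimate the sum, the idea is to bound it by $n$ times its largest term and to reduce a single term to $F$. Applying the Stirling lower bound $k! \geq \sqrt{2\pi k}\, k^k e^{-k}$ to the denominator of the $k$-th summand (for $k \geq 1$) yields $\frac{(n - k)^k}{k!} \leq \frac{1}{\sqrt{2\pi k}}\left(\frac{(n - k)e}{k}\right)^k$. Writing $x = k/n$ and taking logarithms, the exponent is $k\left[\log\frac{n - k}{k} + 1\right] = n\left[x\log(x^{-1} - 1) + x\right] = n F(x)$, so that each summand with $k \geq 1$ is bounded above by $e^{nF(k/n)}$, while the $k = 0$ term equals $1$. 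By Lemma~\ref{lem:auxilary-func} we have $F(x) \leq F(\alpha) = \Omega$ for every $x \in (0,1)$, hence every summand is at most $e^{n\Omega}$ and the whole sum is at most $n\, e^{n\Omega}$.

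Combining the two estimates gives $\mathbb{G}_n \leq \sqrt{2\pi}\, n^{3/2} e^{1/(2n)}\, e^{-n(1 - \Omega)}$, so that $\log \mathbb{G}_n \leq -n(1 - \Omega) + \tfrac{3}{2}\log n + O(1)$. Since $\Omega < 1$ by Lemma~\ref{lem:auxilary-func}, the exponent $-(1 - \Omega)$ is strictly negative; the polynomial factor $n^{3/2}$ is absorbed at the cost of shrinking the rate slightly, so that for every $\varepsilon > 0$ there is a constant $C_\varepsilon$ with $\mathbb{G}_n \leq C_\varepsilon\, e^{-(1 - \Omega - \varepsilon)n}$. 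In particular $\mathbb{G}_n \to 0$ exponentially at a rate less than $1 - \Omega$, as claimed.

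The main point to watch is the uniformity of the term-by-term bound: replacing the sum by $n$ times its maximal term is precisely what avoids a full Laplace-method analysis of the peak near $k = \alpha n$, but it relies on the Stirling lower bound being valid for all $k$ in the range and on the boundary terms (small $k$, or $k$ close to $n - 1$) not violating $F(k/n) \leq \Omega$. These are harmless, since $F$ is bounded by $\Omega$ on all of $(0,1)$ and the $k = 0$ term contributes only $1$; the essential work of locating and evaluating the maximum of $F$ has already been carried out in Lemma~\ref{lem:auxilary-func}.
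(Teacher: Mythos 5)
Your proof is correct and follows essentially the same route as the paper's: the exact formula of Lemma~\ref{lem:1-lower-bound}, Stirling's approximation, replacement of the sum by $n$ times its largest term, and identification of the summand's exponent with the auxiliary function $F$ of Lemma~\ref{lem:auxilary-func}, maximised at $F(\alpha)=\Omega$. The only difference is that you run the argument one-sidedly, getting the explicit bound $\mathbb{G}_n \leq \sqrt{2\pi}\,n^{3/2}e^{1/(2n)}e^{-(1-\Omega)n}$, whereas the paper computes the two-sided limit $\lim_{n\to\infty} n^{-1}\log \mathbb{G}_n = \Omega - 1$ via continuity of $F$ on $(0,1)$; for the proposition as stated your upper bound suffices.
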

\begin{proof}
  By Lemma~\ref{lem:1-lower-bound}
  \[
    \mathbb{G}_n =  \frac{n!}{n^n} \sum_{k = 0}^{n - 1} \frac{(n - k)^k}{k!}.
  \]
  We use the Stirling approximation $\log n! =n\log n - n + O(\log(n))$. Then
  \[
    \frac{\log \mathbb{G}_n}{n} = n^{-1} O(\log(n)) - 1 + n^{-1} \log \sum_{k = 0}^{n-1}
    \frac{(n - k)^k}{k!}.
  \]
  Note that the last term can be bounded from above and below in the following way 
  \[
    \log \left( \max_{k \in \{0,\dots, n-1\}} \frac{(n - k)^k}{k!} \right) \leq
    \log \sum_{k=0}^{n-1} \frac{(n - k)^k}{k!} \leq \log \left( n \max_{k \in
    \{0,\dots, n-1\}} \frac{(n - k)^k}{k!} \right).
  \]
  Hence
  \[
    \log \sum_{k=0}^{n-1} \frac{(n - k)^k}{k!} = \log \left(
    \max_{k\in\{0,\dots, n-1\}} \frac{(n - k)^k}{k!} \right) + O(\log n),
  \]
  and so
  \[
    \lim_{n \to \infty} \frac{\log \mathbb{G}_n}{n} = -1 + \lim_{n \to \infty} n^{-1}
    \log \left( \max_{k\in\{0,\dots, n-1\}} \frac{(n - k)^k}{k!} \right).
  \]
  Considering the second term in the above equation, noting that for $n \geq 3$ 
  the maximum does not occur at $k = 0$, it follows that
  \begin{align*}
    n^{-1}\log \left( \max_{k \in \{0,\dots, n-1\}} \frac{(n - k)^k}{k!}
    \right)
    = & \max_{k \in \{1,\dots, n-1\}} n^{-1} \log \left( \frac{(n -
    k)^k}{k!} \right)\\
    = &\max_{k \in \{1,\dots, n-1\}} n^{-1} \big( k \log(n-k) - k \log k + k \\ 
       &- O(\log k) \big)\\
    = & \max_{x \in M_n} \left( x \log(x^{-1} - 1) + x \right) -
      n^{-1}O\left(\log n \right),
  \end{align*}
  where $M_n=\{ \frac{1}{n}, \frac{2}{n}, \dots, \frac{n-1}{n}\}$. Since $F$ is
  continuous on $(0,1)$ we conclude that $\max_{x \in M_n}
  F(x)\to\max_{x\in(0,1)}F(x)$ as $n\to\infty$. Therefore
  \[
    \lim_{n\to{\infty}}\frac{\log \mathbb{G}_n}{n} =
    -1 + \lim_{n \to \infty} n^{-1}\log \left( \max_{k \in \{0,\dots, n-1\}}
    \frac{(n - k)^k}{k!} \right) = F(\alpha) - 1 = \Omega - 1 < 0,
  \]
  by Lemma~\ref{lem:auxilary-func} as required.
\end{proof}

\subsection{$\mathbb{T}_n$ tends to zero}

Recall that for $n, r\in \mathbb{N}$ such that $r \leq n$, $\mathcal{A}(n,r)$
denotes the set of partitions of $\{1,\ldots,n\}$ into $r$ non-empty
components. Similarly, define $\mathcal{B}(n,r)$ to be the set of 
subsets of $\{1, \ldots, n\}$ of cardinality $r$. Then $\lvert\mathcal{B}(n,r)\rvert =
\binom{n}{r}$.

\begin{lem}\label{lem:prob-equal-ranks}
  Let $n \in \mathbb{N}$. Then the probability that $\rank(xyx) = \rank(y)$,
  where $x, y \in \mathcal{T}_n$ are chosen with uniform probability, is
  \[
    \mathbb{T}_n = \frac{1}{n^{2n}} \sum_{r = 1}^n  \binom{n}{r} r!
    \sum_{k = 1}^r \stirling{r}{k} k! k^{n - r} \sum_{\{A_1, \ldots, A_r\} \in
    \mathcal{A}(n,r)} \sum_{B \in \mathcal{B}(r, k)} \prod_{i \in B} |A_i|.
  \]
\end{lem}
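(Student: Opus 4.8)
The plan is to reduce the event $\rank(xyx)=\rank(y)$ to two combinatorial conditions on the pair $(x,y)$ and then to enumerate the pairs satisfying them, stratified by $r=\rank(x)$.

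First I would prove the characterisation that, for $x,y\in\mathcal{T}_n$, one has $\rank(xyx)=\rank(y)$ if and only if
\[
  \text{(A) } \im(x) \text{ meets every kernel class of } y, \qquad \text{(B) } x \text{ is injective on } \im(y).
\]
Since $\im(xyx)=((\im(x))y)x$, one has $\rank(xyx)\le\rank(xy)\le\rank(y)$ (because $\im(xy)\subseteq\im(y)$ and applying $x$ cannot increase cardinality), so equality of the outer ranks forces $\rank(xy)=\rank(y)$; and as $\rank(xy)=\lvert(\im(x))y\rvert$ equals the number of kernel classes of $y$ met by $\im(x)$, this is exactly (A). Granting (A) we get $\im(xy)=\im(y)$, so $\rank(xyx)=\lvert(\im(y))x\rvert$, which equals $\rank(y)$ precisely when (B) holds; the converse follows by reading these equalities backwards.

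Then I would count. Fixing $r=\rank(x)$, I choose $\ker(x)=\{A_1,\dots,A_r\}\in\mathcal{A}(n,r)$; the remaining data of $x$, namely its image set $J=\im(x)$ of size $r$ together with the bijection from kernel classes to image points, contributes a factor $\binom{n}{r}r!$. The point is that the number of admissible $y$ will turn out to depend on $x$ only through $\ker(x)$ and the number $r$, so this factor separates cleanly. For a fixed such $x$ I then count the $y$ obeying (A) and (B), stratified by $k=\rank(y)$. Condition (B) says that $\im(y)$ contains at most one point of each class $A_i$; choosing the $k$ classes it meets, i.e.\ a set $B\in\mathcal{B}(r,k)$, and one point from each, gives $\sum_{B\in\mathcal{B}(r,k)}\prod_{i\in B}\lvert A_i\rvert$ choices for $\im(y)$ (and forces $k\le r$). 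Condition (A) says every kernel class of $y$ meets $J$, equivalently that $y$ restricted to $J$ is already onto $\im(y)$: the $r$ points of $J$ form a surjection onto the $k$-element set $\im(y)$ in $\stirling{r}{k}k!$ ways, while the remaining $n-r$ points map arbitrarily into $\im(y)$ in $k^{n-r}$ ways. Multiplying, summing over $k$, over $\{A_i\}\in\mathcal{A}(n,r)$ and over $r$, and dividing by $\lvert\mathcal{T}_n\rvert^2=n^{2n}$ yields the stated expression for $\mathbb{T}_n$.

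I expect the main obstacle to be setting up the stratification correctly: it must be organised by $\rank(x)$ rather than $\rank(y)$, since then $\im(x)$ governs condition (A) --- turning it into a surjectivity count from $J$ --- while $\ker(x)$ governs condition (B) --- turning it into the partial-transversal count that produces $\sum_{B\in\mathcal{B}(r,k)}\prod_{i\in B}\lvert A_i\rvert$. Checking that the admissible-$y$ count really is independent of the image set $J$ and of the class-to-image bijection of $x$ (so that $\binom{n}{r}r!$ factors out), and that distinct choices of $\im(y)$ never produce the same $y$, is where the argument needs the most care.
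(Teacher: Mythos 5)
Your proposal is correct and follows essentially the same route as the paper's proof: stratify by $r=\rank(x)$ and $\ker(x)$, observe that $\rank(xyx)=\rank(y)$ forces $\rank(xy)=\rank(y)$ (hence $y$ surjects $\im(x)$ onto $\im(y)$, giving $\stirling{r}{k}k!\,k^{n-r}$ choices) and that $x$ is injective on $\im(y)$ (giving $\sum_{B\in\mathcal{B}(r,k)}\prod_{i\in B}|A_i|$ choices for $\im(y)$), with the factor $\binom{n}{r}r!$ accounting for the image and class-to-image bijection of $x$. If anything, your explicit if-and-only-if formulation of conditions (A) and (B) is slightly more careful than the paper, which verifies necessity of these conditions and leaves sufficiency implicit.
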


\begin{proof}
  Let $x, y \in \mathcal{T}_n$ be such that $\rank(xyx) = \rank(y)$. We first
  show that $\im(xy)$ is contained in a transversal of $x$. Let $\mathfrak{T}$
  be a transversal of $xyx$. Then $xyx$ is injective on $\mathfrak{T}$ by
  definition, and so $x$ is injective on $(\mathfrak{T})xy$. Hence $\im(xy) =
  (\mathfrak{T})xy$ is contained in a transversal of $x$.

  Suppose that $\rank(x) = r$, $\rank(y) = k$, and $\{A_1, \ldots, A_r\} \in
  \mathcal{A}(n,r)$ are the kernel classes of $x$. Then there are $\binom{n}{r}
  r!$ choices for $x$. Since 
  \[
    \rank(y) \geq \rank(xy) \geq \rank(xyx) = \rank(y),
  \]
  it follows that $\rank(xy) = \rank(y) = k$, and also $\im(y) = \im(xy)$.
  Since $x$ is injective on $\im(xy)$, there are
  \[
    \sum_{B \in \mathcal{B}(r, k)} \prod_{i \in B} |A_i| 
  \]
  choices for $\im(y) = \im(xy)$. That is, $\im(xy)$ contains at most one
  point from any kernel class of $x$. Since $(\im(x))y = \im(xy) = \im(y)$, there are
  $\stirling{r}{k} k!$ ways for $y$ to map $\im(x)$ to $\im(y)$. Finally,
  $(\{1, \ldots, n\}) \setminus \im(x))y\subseteq \im(y)$, and so there $k^{n -
  r}$ for $y$ to map $(\{1, \ldots, n\}) \setminus \im(x))$ to $\im(y)$. Hence
  there are in total
  \[
    \stirling{r}{k} k! k^{n - r}\sum_{B \in \mathcal{B}(r, k)} \prod_{i \in B} |A_i| 
  \]
  choices for $y$. Therefore
  \[
    \mathbb{T}_n = \frac{1}{n^{2n}} \sum_{r = 1}^n \sum_{k = 1}^r \sum_{\{A_1,
    \ldots, A_r\} \in \mathcal{A}(n,r)} \binom{n}{r} r!  \stirling{r}{k} k!
    k^{n - r}\sum_{B \in \mathcal{B}(r, k)} \prod_{i \in B} |A_i|, 
  \]
  since $|\mathcal{T}_n| = n^n$.
\end{proof}

Next, we simplify the expression for $\mathbb{T}_n$.
\begin{lem}\label{lem:double-sum}
  Let $n, r, k \in \mathbb{N}$ such that $k \leq r \leq n$. Then
  \[
   \sum_{\{A_1, \ldots, A_r\} \in \mathcal{A}(n,r)} \sum_{B \in \mathcal{B}(r,
   k)} \prod_{i \in B} |A_i| = \sum_{s = k}^{n + k - r} \binom{n}{s}
   \stirling{n - s}{r - k} \binom{s}{k} k^{s-k}.
  \]
\end{lem}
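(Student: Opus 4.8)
The plan is to prove the identity by a double-counting argument, reinterpreting both sides as counts of the same family of combinatorial configurations. First I would rewrite the left-hand side combinatorially. For a fixed partition $\{A_1,\ldots,A_r\}\in\mathcal{A}(n,r)$ and a fixed $k$-subset $B\in\mathcal{B}(r,k)$, the product $\prod_{i\in B}|A_i|$ counts the number of ways to choose one distinguished element from each of the $k$ blocks indexed by $B$. Hence the entire left-hand side counts configurations consisting of (a) a partition of $\{1,\ldots,n\}$ into $r$ nonempty blocks, (b) a choice of $k$ of these blocks, which I will call the \emph{selected} blocks (those indexed by $B$), and (c) a choice of one distinguished element inside each selected block.

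Next I would re-count the same configurations by grouping them according to $s$, the total number of points lying in the selected blocks, that is, $s=\bigl|\bigcup_{i\in B}A_i\bigr|$. Writing $S$ for this union, a configuration is equivalent to the following independent data: the set $S$ itself, a partition of $S$ into $k$ nonempty blocks each equipped with a distinguished element, and a partition of the complement $\{1,\ldots,n\}\setminus S$ into $r-k$ nonempty blocks. The key point is that, since $S$ is a union of blocks of the original partition, every block lies entirely inside $S$ or entirely inside its complement; consequently the selected blocks are recovered as exactly those blocks contained in $S$, and the correspondence is reversible. I would check carefully that this gives a genuine bijection.

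Counting the three pieces of data for a fixed $s$ then produces the factors on the right-hand side. There are $\binom{n}{s}$ choices for $S$, and $\stirling{n-s}{r-k}$ ways to partition the complement into $r-k$ nonempty blocks. The remaining factor, namely the number of ways to partition an $s$-element set into $k$ nonempty blocks each carrying a distinguished element, is precisely $\sum_{\{A_1,\ldots,A_k\}\in\mathcal{A}(s,k)}\prod_{i=1}^{k}|A_i|$, which by Lemma~\ref{lem:idempotent-numbers} (with $n$ and $r$ replaced by $s$ and $k$) equals $\binom{s}{k}k^{s-k}$. Finally I would pin down the range of summation: the selected part must carry $k$ nonempty blocks, forcing $s\geq k$, while the complement must carry $r-k$ nonempty blocks, forcing $n-s\geq r-k$, i.e.\ $s\leq n+k-r$. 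Summing the product $\binom{n}{s}\binom{s}{k}k^{s-k}\stirling{n-s}{r-k}$ over this range yields the claimed formula.

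The only genuinely delicate point is verifying that the re-grouping in the second paragraph is a true bijection rather than an over- or undercount. One must confirm that, because $S$ is a union of original blocks, recording $S$ together with the induced partition of $S$ and the partition of its complement recovers both the full partition and the selection $B$ uniquely, so that no configuration is lost or repeated. Once this is established, the remaining factors are routine, and the identification of the middle factor with an idempotent count via Lemma~\ref{lem:idempotent-numbers} makes the main computation immediate.
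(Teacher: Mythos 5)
Your proof is correct and follows essentially the same route as the paper: both group the sum according to $s = \bigl|\bigcup_{i \in B} A_i\bigr|$, factor the count into $\binom{n}{s}$ choices of the union, $\stirling{n-s}{r-k}$ partitions of the complement, and an inner sum over $\mathcal{A}(s,k)$, which is then evaluated via Lemma~\ref{lem:idempotent-numbers}. Your explicit ``distinguished element'' reformulation and the check that the regrouping is a bijection simply make rigorous what the paper's sum manipulation does implicitly.
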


\begin{proof}
  Let $B \in \mathcal{B}(r,k)$ and $\{A_1, \ldots, A_r\} \in \mathcal{A}(n,r)$
  be fixed and denote the number $|\bigcup \{A_b : b \in B\}|$ by $s$. Note
  that every $A_i$ is non-empty, so $k \leq s \leq n - (r - k)$. Now suppose
  that only $B$ is fixed, then for every value of $s \in \{k, \ldots, n + k -
  r\}$, there are $\binom{n}{s}$ choices for $\bigcup \{A_b : b \in B\}$, and
  there are $\stirling{n-s}{r-k}$ many choices to choose $\{A_b : b \notin
  B\}$. Hence we can write
  \[
    \sum_{\{A_1, \ldots, A_r\} \in \mathcal{A}(n,r)} \sum_{B \in \mathcal{B}(r,
    k)} \prod_{i \in B} |A_i| 
     =  
    \sum_{s = k}^{n + k - r} \binom{n}{s} \stirling{n - s}{r - k}
    \sum_{\{A_1, \ldots, A_k\} \in \mathcal{A}(s, k)} \prod_{i = 1}^k |A_i|.
  \] 

  The result follows by Lemma~\ref{lem:idempotent-numbers}.
\end{proof}

Finally, we prove the main lemma of this section.

\begin{lem}\label{lem:prob-approximation}
  There exist $r \in(0,1)$ and $c > 0$ such that $\mathbb{T}_n  \leq c
  n^{7/2}r^n$.
\end{lem}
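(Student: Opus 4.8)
The plan is to begin from the closed form obtained by feeding Lemma~\ref{lem:double-sum} into Lemma~\ref{lem:prob-equal-ranks}, namely
\[
  \mathbb{T}_n = \frac{1}{n^{2n}} \sum_{r = 1}^n \sum_{k = 1}^r \sum_{s = k}^{n + k - r} \binom{n}{r}\, r!\, \stirling{r}{k}\, k!\, k^{n - r}\, \binom{n}{s}\, \stirling{n - s}{r - k}\, \binom{s}{k}\, k^{s - k}.
\]
Write $T_n(r,k,s)$ for the summand divided by $n^{2n}$. The triple sum runs over at most $n^3$ admissible triples $(r,k,s)$, so it suffices to prove a bound that is uniform in the indices: if each term satisfies $T_n(r,k,s) \le c' \sqrt{n}\, \beta^{\,n}$ for fixed constants $c' > 0$ and $\beta \in (0,1)$ independent of $(r,k,s)$, then summing over the at most $n^3$ triples gives $\mathbb{T}_n \le c' n^{7/2} \beta^{\,n}$, which is the assertion with $r = \beta$.

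To estimate a single term I would first discard the two Stirling numbers of the second kind in favour of elementary quantities using Corollary~\ref{lem:stirling-upper}, i.e.\ $\stirling{r}{k} \le \binom{r}{k} k^{\,r - k}$ and $\stirling{n - s}{r - k} \le \binom{n - s}{r - k} (r - k)^{\,n - s - r + k}$. After this replacement $T_n(r,k,s)$ is a product of binomial coefficients, factorials, and powers only. Taking logarithms and applying Stirling's formula $\log m! = m\log m - m + O(\log m)$ to every factorial (including those inside the binomial coefficients), and normalising the indices by $\rho = r/n$, $\kappa = k/n$, $\sigma = s/n$, one finds
\[
  \frac{1}{n}\log T_n(r,k,s) = \Phi(\rho, \kappa, \sigma) + O\!\left(\frac{\log n}{n}\right)
\]
for an explicit, index-independent function $\Phi$, the $2n\log n$ coming from $n^{2n}$ being cancelled exactly by the $n\log n$-terms of the numerator. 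The aggregated $O(\log m)$ errors exponentiate to a polynomial factor which, after the usual cancellations, contributes the remaining $\sqrt{n}$ in the target bound.

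The heart of the argument is to show that $\Phi$ is bounded above by a strictly negative constant on the compact feasible region
\[
  \Delta = \{(\rho, \kappa, \sigma) : 0 \le \kappa \le \rho \le 1,\ \kappa \le \sigma \le 1 - \rho + \kappa\};
\]
I expect this to be the main obstacle, since $\Phi$ is a genuine three-variable entropy-type function. The plan is to maximise it one variable at a time: for fixed $\rho$ and $\kappa$ the map $\sigma \mapsto \Phi(\rho,\kappa,\sigma)$ is concave (its second derivative is negative, exactly in the spirit of the computation in Lemma~\ref{lem:auxilary-func}), so its maximum is attained either at an endpoint of $[\kappa, 1 - \rho + \kappa]$ or at an interior critical point determined by a transcendental equation of the same shape as the one solved by the Lambert $W$ function there. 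Substituting the optimal $\sigma$ reduces $\Phi$ to two variables, and iterating the concavity argument in $\kappa$ and then $\rho$ reduces the problem to a one-variable maximisation resolvable by the same calculus used for $F$ in Lemma~\ref{lem:auxilary-func}. Verifying that the resulting optimal value is strictly below $0$ is the crucial and most delicate step; once it is in hand, continuity of $\Phi$ on the compact set $\Delta$ upgrades the strict inequality at the maximiser to a uniform bound $\Phi \le \log \beta$ with $\beta \in (0,1)$. Reinstating the polynomial Stirling corrections and the factor $n^3$ for the number of terms then yields $\mathbb{T}_n \le c' n^{7/2}\beta^{\,n}$ and completes the proof.
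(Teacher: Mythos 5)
Your setup coincides with the paper's opening moves: both start from the triple sum obtained by feeding Lemma~\ref{lem:double-sum} into Lemma~\ref{lem:prob-equal-ranks}, discard the Stirling numbers of the second kind via Corollary~\ref{lem:stirling-upper}, apply Stirling's formula, and reduce the problem to bounding an explicit exponent function on a compact set. But there is a genuine gap: the entire difficulty of this lemma is the verification that the limiting exponent is \emph{strictly} negative (equivalently, that the relevant supremum is strictly less than $1$), and you never carry it out --- you explicitly defer it (``the crucial and most delicate step; once it is in hand\ldots''). A plan that says ``maximise one variable at a time by concavity, solve the transcendental critical-point equations, and check the value is negative'' is not a proof: partial maximisation over $\sigma$ produces a function defined only implicitly through a Lambert-$W$-type equation, there is no argument that the resulting two-variable function is again amenable to the same treatment (partial maximisation preserves concavity only for \emph{jointly} concave functions, which yours is not claimed to be), and no analytic or numerical estimate of the final value is given. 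In the paper, precisely this verification occupies almost the whole proof: a factorisation $F = F_1 F_2$, a monotonicity analysis of $F_1$ with the bound $F_1(x_0) < 1.75$, the sign analysis of $F_3$ showing $\partial F_2/\partial x \leq 0$, explicit Lambert-$W$ evaluations, and the numerical estimate $1.75 \cdot 0.56 = 0.98 < 1$, patched together over a covering of $[0,1]^2$.

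You also miss a simplification that the paper exploits and that substantially lowers the difficulty: after the bound $\stirling{n-s}{r-k} \leq \binom{n-s}{r-k}(r-k)^{n-s-r+k}$ and the identities \eqref{equation:3}, the sum over $s$ can be evaluated \emph{exactly} by the binomial theorem, since $\sum_{i=0}^{n-r}\binom{n-r}{i}(r-k)^{n-r-i}k^{i} = r^{n-r}$; this eliminates the index $s$ altogether and leaves a two-variable maximisation in place of your three-variable one. (Your three-variable route would in principle give the same exponential rate, since the maximal $\sigma$-term matches the collapsed sum up to a factor of $n$, but it makes the already delicate critical-point analysis strictly harder.) As a minor point, your accounting of the polynomial factor (``$\sqrt{n}$ per term'') is not justified --- several binomial coefficients each contribute half-powers of $n$ --- but this is harmless, since any polynomial factor can be absorbed into a slightly larger exponential base $r < 1$; the real gap is the unproven negativity of the maximum.
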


\begin{proof}
  Note that by Stirling's approximation there are constants $a, b> 0$ such that
  $a n^n e^{-n} \leq n! \leq b  n^{n + \frac{1}{2}} e^{-n}$ for all $n\in\N$.

  It follows from
  Lemmas~\ref{lem:stirling-upper},~\ref{lem:prob-equal-ranks},
  and~\ref{lem:double-sum} that
  \begin{align*}
    n^{2n} \mathbb{T}_n 
    =&
    \sum_{r = 1}^n \binom{n}{r} r! \sum_{k=1}^{r} \stirling{r}{k} k!\,
    k^{n-r} \sum_{s=k}^{n+k-r} \binom{n}{s} \stirling{n-s}{r-k}
    \binom{s}{k}k^{s-k}
    \\ \leq&
    \sum_{r=1}^n \binom{n}{r} r! \sum_{k=1}^{r} \binom{r}{k} k^{n-k} k!
    \sum_{s=k}^{n+k-r} \binom{n}{s} \binom{s}{k} \binom{n-s}{r-k}
    (r-k)^{n-s-r+k} k^{s-k}.
  \end{align*}
  Observe that 
  \begin{equation}\label{equation:3}
    \binom{n}{s} \binom{s}{k} = \binom{n}{k} \binom{n - k}{n - s}
    \quad \text{and} \quad
    \binom{n - k}{n - s} \binom{n - s}{r - k} = \binom{n - k}{r - k} \binom{n - r}{s - k}.
  \end{equation}
  Hence
  \begin{align*}
    n^{2n} \mathbb{T}_n 
    \leq&
    \sum_{r=1}^n \binom{n}{r} r! \sum_{k=1}^{r} \binom{r}{k} k^{n-k} k!
    \sum_{s=k}^{n+k-r} \binom{n}{k} \binom{n - k}{r - k} \binom{n - r}{s - k}
    (r-k)^{n-s-r+k} k^{s-k}
    \\ =&
    \sum_{r=1}^n \binom{n}{r} r! \sum_{k=1}^{r} \binom{r}{k} k^{n-k} k!
    \binom{n}{k} \binom{n - k}{r - k} \sum_{i = 0}^{n-r} \binom{n - r}{i}
    (r-k)^{n-r - i} k^{i}
    \\ =&
    \sum_{r=1}^n \binom{n}{r} r! \sum_{k=1}^{r} \binom{r}{k} k^{n-k} k!
    \binom{n}{k} \binom{n - k}{r - k}
    r^{n - r}.
  \end{align*}

  It can also be show that
  \begin{equation}\label{equation:4}
    \binom{n}{k} \binom{n - k}{r - k} = \binom{n}{r} \binom{r}{k},
  \end{equation}
  and so
  \[
    n^{2n} \mathbb{T}_n \leq \sum_{r=1}^n \binom{n}{r}^2 r! \sum_{k=1}^{r}
    \binom{r}{k}^2 k! \, k^{n-k} r^{n - r} = \sum_{r=1}^n \frac{n!^2}{(n -
    r)!^2} \sum_{k=1}^{r} \frac{r!}{k!(r-k)!^2}  k^{n-k} r^{n - r}.
  \]
  Hence using Stirling's formula there is a constant $c > 0$ such that
  \[
    n^{2n} \mathbb{T}_n \leq c \sum_{r = 1}^n \frac{n^{2n + 1} e^{-2n}}{(n -
    r)^{2(n-r)} e^{-2(n-r)}} \sum_{k = 1}^r \frac{r^{r + \frac{1}{2}} e^{-r}}
    {e^{-k} k^{k} e^{-2(r - k)} (r - k)^{2(r - k)}} k^{n -k} r^{n - r},
  \]
 which can be simplified to
  \begin{align*}
    \mathbb{T}_n &\leq c \sum_{r = 1}^n \sum_{k = 1}^r \frac{n r^{n + \frac{1}{2}
    } k^{n - 2k}} {e^{r + k}(n - r)^{2(n-r)} (r - k)^{2(r - k)}} \\
    &\leq
    c n^2 \max_{\substack{1\leq r \leq n\\ 1\leq k \leq r}} \left\{ \frac{n r^{n +
    \frac{1}{2}} k^{n - 2k}} {e^{r + k}(n - r)^{2(n-r)} (r - k)^{2(r - k)}}
    \right\}.
  \end{align*}

  Let $x, y \in [0,1]$ be such that $r = xn$ and $k = yr = xyn$. Then
  \begin{align*}
    \mathbb{T}_n 
    &\leq c n^2 \max_{\substack{1\leq r \leq n\\1\leq k \leq r}} \left\{
    n^{\frac{3}{2} } \frac{x^{n + \frac{1}{2} } (xy)^{n - 2k}} {e^{r + k}(1 -
    x)^{2(n-r)} (x - xy)^{2(r - k)}} \right\} \\ 
    &\leq c n^{\frac{7}{2}} \sup_{(x,y) \in [0,1]^2} \left\{ \frac{x^{2(n - xn) +
    \frac{1}{2}} y^{n - 2xyn}} {e^{xn + xyn}(1 - x)^{2(n-xn)} (1 - y)^{2(xn -
    xyn)}} \right\} \\ 
    &\leq c n^{\frac{7}{2}} \left( \sup_{(x,y) \in [0,1]^2} \left\{ \frac{x^{2(1 - x)}
    y^{1 - 2xy}} {e^{x(1 + y)}(1 - x)^{2(1-x)} (1 - y)^{2x(1 - y)}} \right\}
    \right)^n.
  \end{align*}

  It only remains to show that the supremum in the above equation is less than
  $1$. In order to do so, define $F : [0,1]^2 \to \mathbb{R}$ by 
  \[
    F(x,y) = \frac{x^{2(1 - x)} y^{1 - 2xy}} {e^{x(1 + y)}(1 - x)^{2(1-x)} (1 -
    y)^{2x(1 - y)}}.
  \]
  Note that $F$ is continuous on a compact set $[0, 1]^2$, and so has a
  maximum. Hence we only need to consider the boundary of the domain and
  stationary points of $F$, that is points in $[0,1]^2$ where $\partial
  F/\partial x = 0 = \partial F/\partial y$. However, while it can be
  immediately be deduced from plots, using any mathematical software, that the
  maximum of $F$ is strictly less than $1$, we show it here analytically. To
  this end, define the functions $F_1, F_3: [0, 1] \to \mathbb{R}$ and $F_2:
  [0,1]^2 \to \mathbb{R}$ by
  \[
    F_1 (x) = \frac{x^{2(1 - x)}} {(1 - x)^{2(1-x)} },
    \qquad
    F_2(x,y)=\frac{y^{1 - 2xy}} {e^{x(1 + y)} (1 - y)^{2x(1 - y)}},
  \]
  and
  \[
    F_3(y) = -1 - y - 2 (1 - y) \log(1 - y) - 2 y \log y.
  \] 
  Then $F(x,y)=F_1(x) F_2(x,y)$, and it can be shown that $\partial
  F_2(x,y)/ \partial x = F_2(x,y) F_3(y)$. Also note that that $F_1$, $F_2$,
  and $F_3$ are all continuous.

  Since $F_1(x)$ is continuous on a compact set, we can perform standard
  analysis of stationary points. Then
  \[
    \frac{d F_1(x)}{dx} = F_1(x)  (1 + x \log(1 - x) - x \log x).
  \]
  and $F_1(x) > 0$ for all $x \in (0, 1]$. Thus the stationary points of
  $F_1$ are either $0$, $1$, or $x_0$, which is given by the equation 
  \[
    (1 + x_0 \log(1 - x_0) - x_0 \log x_0)=0,
  \]
  or in other words, $x_0=1/(1+W(e^{-1}))$ where $W$ is the Lambert-W
  function. It follows that $F_1$ is bounded from above by $\max\{F_1(0),
  F_1(1), F(x_0)\}$. A simple algebraic manipulation gives 
  \[
    F_1(x_{0})=W(e^{-1})^{-\frac{2}{1+W(e^{-1})^{-1}}}< 1.75.
  \]
  Since $F_1(0) = 0$ and $F_1(1) = 1$, it follows that $F_1(x) \leq 1.75$ for
  all $x \in [0,1]$. We also note here, that $dF_1(x)/dx$ is positive for all
  $x \in [0, x_0)$.

  Next we show that $\partial F_2(x,y)/\partial x \leq 0$ for all $x y \in
  [0, 1]$. First, observe that  $F_2 (x,y) \geq 0$ over $[0,1]^2$. Since
  $\partial F_2(x, y)/\partial x = F_2(x,y) F_3(y)$, we are left to show that
  $F_3(y) \leq 0$ for $y \in [0, 1]$. Note that $F_3(0) = -1$, $F_3(1) = -2$,
  and 
  \[ 
    \frac{d F_3(y)}{dy}=-1 + 2 \log(1 - y) - 2
    \log y \quad \text{ and }\quad
    \frac{d^2 F_3(y)}{dy^2}=\frac{2}{(y-1)y}.
  \]
  Since $d^2 F_3(y)/ dy^2<0$ for all $y\in(0,1)$, $F_3$ has a unique maximum at
  $(1+\sqrt{e})^{-1}$, and
  \[
    F_3\left(\frac{1}{1 + \sqrt{e}}\right) =2 \log(1 + \sqrt{e}) - 2<0.
  \]
  Hence $\partial F_2(x,y)/\partial x \leq 0$ for all $x, y \in [0, 1]$, and so
  $F_2(x,y)\leq F_2(0,y)=y$ and in particular $F_2(x,y)\leq1$.

  For the last step of the proof consider 
  \[
    F_2(\frac{1}{2},y)=e^{-\frac{y+1}{2}}\left(\frac{y}{1-y}\right)^{1-y}.
  \]
  Then 
  \[
    \frac{dF_2(\frac{1}{2},y)}{dy}= F_2(\frac{1}{2},y) (y^{-1} -
    \frac{1}{2} + \log\left(y^{-1} - 1\right) )
  \]
  and the derivative has a single root at $y_0=1/(1+W(e^{-1/2}))$. Hence if $x
  \in [1/2, 1]$ and $y \in [0,1]$, then 
  \[
    F_2(x,y) \leq F_2(1/2, y) \leq \max\{F_2(1/2,0),F_2(1/2,1),F_2(1/2,y_0)\} < 0.56, 
  \] 
  and so $F(x, y) \leq 1.75 \cdot 0.56 = 0.98$. Since $F(x, y)$ continuous on
  $[0,1]^2$ there is $\varepsilon > 0$ and $\beta < 1$ such that
  $F(x, y) \leq \beta$ for all $x \in [1/2 - \varepsilon, 1]$ and all $y \in
  [0, 1]$.

  Finally, recall that $x_0 = 1/(1 + W(e^{-1})) > 0.78$ and $F_1(x)$ is
  increasing on $[0, x_0)$.  We observe that if $x \in [0, 1/2 - \varepsilon]
  \subseteq [0, x_0)$ then $0 = F_1(0) \leq F_1(x) \leq  F_1(1/2 - \varepsilon)
  < F_1(1/2) = 1$. Since $F_2(x, y) \leq 1$, it follows that $F(x, y) \leq
  F_1(1/2 - \varepsilon) < 1$ for all $x \in [0, 1/2 - \varepsilon]$ and all $y
  \in [0, 1]$. Therefore $F(x, y) \leq \max\{\beta, F_1(1/2 - \varepsilon)\} <
  1$ for all $x, y \in [0, 1]$, as required.
\end{proof}

The following is an immediate corollary of Lemmas~\ref{lem:prob-equal-ranks}
and~\ref{lem:prob-approximation}.

\begin{cor}\label{cor:prob-k2}
  The probability $\mathbb{T}_n$, that $\rank(xyx) = \rank(y)$ where $x, y \in
  \mathcal{T}_n$ are chosen with uniform distribution, tends to $0$ as $n \to
  \infty$ exponentially fast.
\end{cor}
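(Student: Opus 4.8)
The plan is to deduce the corollary directly from the quantitative bound already in hand. By Lemma~\ref{lem:prob-equal-ranks}, $\mathbb{T}_n$ is exactly the stated probability, written as an explicit finite sum of nonnegative terms, so in particular $0 \leq \mathbb{T}_n \leq 1$ for every $n$. By Lemma~\ref{lem:prob-approximation}, there are constants $r \in (0,1)$ and $c > 0$ with $\mathbb{T}_n \leq c\, n^{7/2} r^n$ for all $n$. The entire content of the corollary is then that this product-of-polynomial-with-$r^n$ bound forces exponential decay.

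The only remaining observation is that the polynomial prefactor $c\, n^{7/2}$ does not affect the exponential rate. First I would note that $n^{7/2} r^n \to 0$ as $n \to \infty$, since for any fixed $r \in (0,1)$ the exponential $r^n$ dominates any fixed polynomial in $n$; hence $\mathbb{T}_n \to 0$. To make the rate of decay explicit, I would take $n$-th roots:
\[
  \mathbb{T}_n^{1/n} \leq c^{1/n}\, n^{7/(2n)}\, r \longrightarrow r < 1,
\]
where $c^{1/n} \to 1$ and $n^{7/(2n)} = e^{(7/2) n^{-1}\log n} \to 1$. Consequently $\limsup_{n \to \infty} \mathbb{T}_n^{1/n} \leq r < 1$, which is precisely the assertion that $\mathbb{T}_n \to 0$ exponentially fast; equivalently, for any $r'$ with $r < r' < 1$ one has $\mathbb{T}_n \leq (r')^n$ for all sufficiently large $n$.

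There is no genuine obstacle at this stage. All the substantive work --- simplifying the multiple sum by means of Lemmas~\ref{lem:stirling-upper} and~\ref{lem:double-sum}, the binomial identities~\eqref{equation:3} and~\eqref{equation:4}, the Stirling estimates, and the analysis of the two-variable function $F$ whose supremum over $[0,1]^2$ is shown to be strictly less than $1$ --- has already been carried out in the proof of Lemma~\ref{lem:prob-approximation}. The present corollary merely records the elementary fact that a bound of the shape (polynomial in $n$) $\times\, r^n$ with $r < 1$ entails exponential decay, because the polynomial factor is absorbed upon passing to $n$-th roots.
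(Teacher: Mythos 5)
Your proposal is correct and matches the paper exactly: the paper states this corollary is immediate from Lemmas~\ref{lem:prob-equal-ranks} and~\ref{lem:prob-approximation}, which is precisely the deduction you give, with the $n$-th root argument merely making explicit the routine fact that a polynomial prefactor does not affect exponential decay.
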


\subsection{$\mathbb{V}_n$ tends to zero}

We start by finding an expression for $\mathbb{V}_n$ in terms of $n$. The
argument is similar to the proof of Lemma~\ref{lem:prob-equal-ranks}.

\begin{lem}
  Let $n \in \mathbb{N}$. Then the probability that $\rank(xyz) = \rank(y)$,
  where $x, y, z \in \mathcal{T}_n$ are chosen with uniform probability, is
  \[
    \mathbb{V}_n = \frac{1}{n^{3n}} \sum_{r = 1}^n \sum_{k = 1}^r \sum_{t =
    1}^{\min(r, k)} \stirling{n}{r} \binom{n}{r} r! \binom{n}{k} k!
    \stirling{r}{t} t! t^{n - r} \sum_{s = t}^{n + t - k} \binom{n}{s} \binom{n
    - s}{r - k} \binom{s}{t} t^{s - t}.
  \] 
\end{lem}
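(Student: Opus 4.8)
The plan is to follow the template of the proof of Lemma~\ref{lem:prob-equal-ranks}: first reduce the event $\rank(xyz)=\rank(y)$ to a pair of structural conditions on $x$, $y$, $z$ that can be counted, and then count the admissible triples by conditioning on the relevant ranks. For the reduction I would start from the chains $\rank(xyz)\le\rank(xy)\le\rank(y)$ and $\rank(xyz)\le\rank(yz)\le\rank(y)$, which hold for all $x,y,z$. When the event holds, squeezing forces every term in each chain to equal $\rank(y)$. From $\rank(xy)=\rank(y)$ I get $\im(xy)=\im(y)$, i.e.\ $y$ maps $\im(x)$ \emph{onto} $\im(y)$; from $\rank(yz)=\rank(y)$ I get that $z$ is injective on $\im(y)$, i.e.\ $\im(y)$ lies inside a transversal of $z$. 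Conversely, if $\im(xy)=\im(y)$ and $z$ is injective on $\im(y)$, then $\im(xyz)=(\im(y))z$ has size $\abs{\im(y)}=\rank(y)$, so the event holds. Thus the two conditions characterise the event exactly, just as the partial-transversal condition did in the $\mathbb{T}_n$ computation.

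Next I would count the triples satisfying these conditions by conditioning on $r=\rank(x)$, on the common value $t=\rank(y)=\rank(xyz)$, and on $k=\rank(z)$, where $t\le r$ (forced by the surjection $\im(x)\twoheadrightarrow\im(y)$) and $t\le k$ (forced by injectivity of $z$ on the $t$-set $\im(y)$). The element $x$ ranges freely over transformations of rank $r$, of which there are $\stirling{n}{r}\binom{n}{r}r!$. Given $\im(x)$ of size $r$ and a target image $\im(y)$ of size $t$, the map $y$ is determined by a surjection $\im(x)\to\im(y)$, of which there are $\stirling{r}{t}t!$, together with an arbitrary map of the remaining $n-r$ points into $\im(y)$, giving $t^{n-r}$; these are precisely the factors $\stirling{r}{t}t!\,t^{n-r}$, and they depend only on $r$ and $t$. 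Finally $z$ contributes $\binom{n}{k}k!$ for its image, together with a choice of kernel partition into $k$ classes subject to the constraint that $\im(y)$ be a partial transversal, i.e.\ that it meet each of $t$ of the classes exactly once.

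The subtle point, and the step I expect to be the main obstacle, is that the $t$-set $\im(y)$ is \emph{shared}: it is simultaneously the surjective image of $y$ restricted to $\im(x)$ and a partial transversal of $z$. I would therefore not count $\im(y)$ and the kernel of $z$ separately, but sum jointly over all kernel partitions $\{C_1,\dots,C_k\}\in\mathcal{A}(n,k)$ and all selections $B\in\mathcal{B}(k,t)$ of $t$ classes with one representative chosen from each, weighting by $\prod_{i\in B}\abs{C_i}$; this counts the placements of $\im(y)$ as a partial transversal of $z$. Exactly as in the proof of Lemma~\ref{lem:prob-equal-ranks}, this double sum is then evaluated by Lemma~\ref{lem:double-sum} (applied to a partition into $k$ blocks of which $t$ are selected), which converts it into the inner sum over $s$ appearing in the statement. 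Assembling the contributions of $x$, $y$, and $z$, summing over the admissible ranges $1\le t\le\min(r,k)$ and $1\le k\le r\le n$, and dividing by $\abs{\mathcal{T}_n}^3=n^{3n}$ yields the claimed formula. The only genuine care required is the bookkeeping of the shared set $\im(y)$ (so as not to over- or under-count) and the correct substitution of parameters into Lemma~\ref{lem:double-sum}; the remaining manipulations are routine and parallel the $\mathbb{T}_n$ case.
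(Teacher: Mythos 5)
Your plan is, in approach, exactly the paper's proof: the paper also characterises the event by $\im(xy)=\im(y)$ together with injectivity of $z$ on $\im(xy)$ (obtained there via a transversal of $xyz$ rather than your second squeeze chain $\rank(xyz)\le\rank(yz)\le\rank(y)$, and with the converse left implicit where you check it), counts $x$ by $\stirling{n}{r}\binom{n}{r}r!$, counts $y$ by $\stirling{r}{t}t!\,t^{n-r}$ once $\im(y)$ is fixed, handles the shared set $\im(y)$ jointly with $\ker(z)$ via $\sum_{B}\prod_{i\in B}|A_i|$, and finishes with Lemma~\ref{lem:double-sum}.

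The gap is in your last step, and it is not cosmetic. First, your own reduction forces only $t\le\min(r,k)$; it gives no inequality between $k=\rank(z)$ and $r=\rank(x)$, yet you declare the range ``$1\le k\le r\le n$'' admissible. Triples with $\rank(z)>\rank(x)$ (for instance $x,y$ constant and $z$ the identity) satisfy the event but are excluded by that range, so the sum over $k$ must run to $n$. Second, Lemma~\ref{lem:double-sum} applied to $\mathcal{A}(n,k)$ and $\mathcal{B}(k,t)$ yields $\sum_{s=t}^{n+t-k}\binom{n}{s}\stirling{n-s}{k-t}\binom{s}{t}t^{s-t}$, which is \emph{not} the inner sum $\sum_{s}\binom{n}{s}\binom{n-s}{r-k}\binom{s}{t}t^{s-t}$ of the statement; your claim that the substitution ``converts it into the inner sum over $s$ appearing in the statement'' is precisely the step that fails. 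Both discrepancies are in fact inherited from the paper itself, whose proof writes $\mathcal{B}(r,t)$ where the kernel classes of $z$ are indexed by $\{1,\ldots,k\}$, and whose range $k\le r$ is carried over from the $\mathbb{T}_n$ computation (where it was justified, since there $k$ was $\rank(y)$). As printed, the formula is not correct: for $n=3$ it evaluates to $7722/3^{9}$, whereas a direct count gives $\mathbb{V}_3=8235/3^{9}$. Your strategy, carried out carefully, proves the corrected identity
\[
  \mathbb{V}_n=\frac{1}{n^{3n}}\sum_{r=1}^{n}\sum_{k=1}^{n}\sum_{t=1}^{\min(r,k)}
  \stirling{n}{r}\binom{n}{r}r!\,\binom{n}{k}k!\,\stirling{r}{t}t!\,t^{n-r}
  \sum_{s=t}^{n+t-k}\binom{n}{s}\stirling{n-s}{k-t}\binom{s}{t}t^{s-t},
\]
and cannot produce the formula as stated; the two points you yourself flagged as ``the only genuine care required'' are exactly where your writeup asserts rather than verifies.
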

\begin{proof}
  If $x, y, z \in \mathcal{T}_n$ are such that $\rank(xyz) = \rank(y)$. We first
  show that $\im(xy)$ is contained in a transversal of $z$. Let $\mathfrak{T}$
  be a transversal of $xyz$. Then $xyz$ is injective on $\mathfrak{T}$ by
  definition, and so $z$ is injective on $(\mathfrak{T})xy$. Hence $\im(xy) =
  (\mathfrak{T})xy$ is contained in a transversal of $z$.

  Suppose that $\rank(x) = r$, $\rank(z) = k$, $\rank(y) = t$, and $\{A_1,
  \ldots, A_k\} \in \mathcal{A}(n,k)$ are the kernel classes of $z$. Note that
  $t \leq r$ and $t \leq k$. Then there are $\binom{n}{r} \stirling{n}{r} r!$
  choices for $x$ and $\binom{n}{k} k!$ choices for $z$. Since 
  \[
    \rank(y) \geq \rank(xy) \geq \rank(xyz) = \rank(y),
  \]
  it follows that $\rank(xy) = \rank(y) = t$, and also $\im(y) = \im(xy)$.
  Since $z$ is injective on $\im(xy)$, there are
  \[
    \sum_{B \in \mathcal{B}(r, t)} \prod_{i \in B} |A_i| 
  \]
  choices for $\im(y) = \im(xy)$. That is, $\im(xy)$ contains at most one
  point from any kernel class of $z$. Since $(\im(x))y = \im(xy) = \im(y)$, there are
  $\stirling{r}{t} t!$ ways for $y$ to map $\im(x)$ to $\im(y)$. Finally,
  $(\{1, \ldots, n\}) \setminus \im(x))y\subseteq \im(y)$, and so there $t^{n -
  r}$ ways for $y$ to map $(\{1, \ldots, n\}) \setminus \im(x))$ to $\im(y)$. Hence
  there are in total
  \[
    \stirling{r}{t} t! t^{n - r}\sum_{B \in \mathcal{B}(r, t)} \prod_{i \in B} |A_i| 
  \]
  choices for $y$. Therefore
  \[
    \mathbb{V}_n = \frac{1}{n^{3n}} \sum_{r = 1}^n \sum_{k = 1}^r \sum_{t =
    1}^{\min(r, k)} \sum_{\{A_1, \ldots, A_k\} \in \mathcal{A}(n,k)}
    \binom{n}{r} r! \binom{n}{k} k! \stirling{n}{r} \stirling{r}{t} t! t^{n -
    r}\sum_{B \in \mathcal{B}(r, t)} \prod_{i \in B} |A_i|, 
  \]
  since $|\mathcal{T}_n| = n^n$. It follows from Lemma~\ref{lem:double-sum}
  that
  \[
    \mathbb{V}_n = \frac{1}{n^{3n}} \sum_{r = 1}^n \sum_{k = 1}^r \sum_{t =
    1}^{\min(r, k)} \stirling{n}{r} \binom{n}{r} r! \binom{n}{k} k!
    \stirling{r}{t} t! t^{n - r} \sum_{s = t}^{n + t - k} \binom{n}{s} \binom{n
    - s}{r - k} \binom{s}{t} t^{s - t},
  \]
  as required.
\end{proof}

Finally, we prove the main two lemmas of this section. This is an analogue of
Lemma~\ref{lem:prob-approximation}.

\begin{lem}\label{lem:bound-V}
  There exist $c > 0$ such that 
  \[
    \mathbb{V}_n \leq c n^5\left(\max_{\substack{x,y,z\in
    (0,1]\\z\leq\min(x,y)}}G(x,y,z)\right)^n,
  \]
  where
  \[
    G(x,y,z) = \frac{x^{1-x} y^{1-y} z^{1-2z}}{e^{x+y+z} (1-x)^{2(1-x)}
    (1-y)^{2(1-y)} (x-z)^{x-z} (y-z)^{y-z}}.
  \]
\end{lem}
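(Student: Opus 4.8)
The plan is to follow the template established by the proof of Lemma~\ref{lem:prob-approximation}, turning the exact expression for $\mathbb{V}_n$ into a product of a polynomial factor in $n$ and the $n$-th power of a supremum of an elementary function. First I would apply Corollary~\ref{lem:stirling-upper} to replace every Stirling number $\stirling{\cdot}{\cdot}$ by the corresponding binomial times power expression, so that the entire summand consists of binomials, factorials, and powers. As in the previous proof, I would then use binomial identities analogous to \eqref{equation:3} and \eqref{equation:4} to telescope the inner sum over $s$: the sum $\sum_{s=t}^{n+t-k}\binom{n}{s}\binom{n-s}{r-k}\binom{s}{t}t^{s-t}$ should collapse after rewriting $\binom{n}{s}\binom{s}{t}=\binom{n}{t}\binom{n-t}{n-s}$ and combining with $\binom{n-s}{r-k}$, leaving a clean closed form (most likely producing a factor like $r^{n-k}$ or similar via a binomial-theorem identity of the form $\sum_i\binom{n-k}{i}(r-k)^{\,\ast}t^{\,\ast}$). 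This eliminates one of the four summation variables.

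Once the inner sum is resolved, I would be left with a triple sum over $r,k,t$. I would bound this triple sum above by $n^3$ (or a suitable polynomial power of $n$) times the maximum of its summand, exactly as the bound $\sum \leq n^2 \max\{\cdots\}$ was used before. Then I would substitute the scaled variables $r=xn$, $k=yn$, and $t=zn$ with $x,y,z\in(0,1]$ and $z\leq\min(x,y)$, reflecting the constraints $t\leq r$ and $t\leq k$. Applying the two-sided Stirling estimate $a\,n^n e^{-n}\leq n!\leq b\,n^{n+1/2}e^{-n}$ to each factorial, the extra $n^{1/2}$ factors would contribute the stated polynomial prefactor, and after collecting the exponential and power terms the summand would take the form $C\,n^{(\text{const})}\,G(x,y,z)^n$ with $G$ as claimed. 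Tracking the constant in the polynomial degree (giving $n^5$) is a matter of counting: one factor of $n$ from each of the three outer sums, plus the accumulated half-integer powers from Stirling applied to the relevant factorials.

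The main obstacle I anticipate is the bookkeeping in the inner-sum collapse: matching up which factorials and Stirling numbers contribute $\binom{n}{t}$ versus $\binom{n-t}{\cdot}$, and verifying that the binomial identities chain together to yield a term-free-of-$s$ closed form rather than a residual sum. There is genuine risk of an off-by-one in the summation limits $s=t,\dots,n+t-k$, and the two instances of $\stirling{n}{r}$ (one from the $x$-count and one hidden elsewhere) must be handled carefully so that the powers of $n$ and the exponent base inside $G$ come out exactly as stated. I would verify the final form of $G$ by checking that setting variables appropriately specialises to the $\mathbb{T}_n$ case (where the analogous function had the shape $x^{2(1-x)}y^{1-2xy}/(e^{x(1+y)}(1-x)^{2(1-x)}(1-y)^{2x(1-y)})$), since $\mathbb{V}_n$ with $x=z$ should degenerate toward a comparable expression. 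The subsequent (separate) lemma showing $\sup G<1$ is deferred; this lemma's burden is purely to establish the displayed bound, so I would stop once the polynomial-times-$G^n$ form is in hand.
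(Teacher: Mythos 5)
Your plan follows the paper's proof essentially step for step: apply Corollary~\ref{lem:stirling-upper} to remove the Stirling numbers of the second kind, use the identities \eqref{equation:3} and \eqref{equation:4} together with the binomial theorem to collapse the inner sum over $s$ (which yields the factor $k^{n-k}$, close to your guess), bound the remaining triple sum by $n^3$ times its largest term, apply Stirling's approximation to the factorials, and rescale $r=xn$, $k=yn$, $t=zn$ to extract $G(x,y,z)^n$ with the $n^5$ prefactor. This is exactly the paper's argument, so your approach is correct and not materially different.
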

\begin{proof}
  We begin by applying the same strategy as in
  Lemma~\ref{lem:prob-approximation}. That is we use Lemma~\ref{lem:stirling-upper} to
  give an upper bound without Stirling numbers of the second kind and then use
  equations \eqref{equation:3} and \eqref{equation:4}. It follows that

  \begin{align*}
    n^{3n}\mathbb{V}_n &=
      \sum_{r = 1}^n \sum_{k = 1}^r \sum_{t = 1}^{\min(r, k)}
      \stirling{n}{r} \binom{n}{r} r! \binom{n}{k} k! \stirling{r}{t} t! t^{n -
      r} \sum_{s = t}^{n + t - k} \binom{n}{s} \binom{n - s}{r - k}
      \binom{s}{t} t^{s - t} \\
    &\leq
      \sum_{r = 1}^n \sum_{k=1}^n  \sum_{t=1}^{\min(r,k)} \left(\frac{n!}{(n-r)!}\right)^2
       \left( \frac{n!}{(n-k)!}\right)^2
      \frac{t^{n-t} r^{n-r} k^{n-k}}{(r-t)!(k-t)!t!}
  \end{align*}
  Replacing the sums with $n$ times their maximal value we obtain after
  some algebraic manipulation
  \[
    n^{3n}\mathbb{V}_n \leq 
    n^3 \max_{\substack{1\leq r\leq n\\ 1\leq k\leq n\\
    1 \leq t \leq \min(r,k)}}
    \left\{ \frac{(n!)^4 k^{n-k} r^{n-r} t^{n -
    t}}{((n-r)!(n-k)!)^2(r-t)!(k-t)!t!}\right\}
  \] 

  Using Stirling's approximation $\mathbb{V}_n$ can be bounded by
  \[
    n^{3n}\mathbb{V}_n \leq c n^3 \max_{\substack{1\leq r\leq n\\ 1\leq k\leq
    n\\ 1 \leq t \leq \min(r,k)}} \left\{ \frac{n^{4n + 2} k^{n-k} r^{n-r} t^{n
    - 2t}}{(n - r)^{2(n - r)} (n - k)^{2(n - k)} (r - t)^{r - t} (k - t)^{k - t}
    e^{r + k + t}} \right\}
  \]
  for some $c > 0$. Let $x=r/n$, $y=k/n$, and $z=t/n$. The above equation can
  be rearranged to obtain
  \[
    \mathbb{V}_n \leq
      c n^5\left(\max_{\substack{x,y,z\in (0,1]\\z\leq\min(x,y)}}
      \left\{ \frac{x^{1-x}
      y^{1-y}z^{1-2z}}{e^{x + y + z}(1-x)^{2(1-x)}(1-y)^{2(1-y)} 
      (x-z)^{x-z}(y-z)^{y-z}}\right\} \right)^n.
  \]
  Hence
  \[
    \mathbb{V}_n \leq c n^5\left(\max_{\substack{x,y,z\in
    (0,1]\\z\leq\min(x,y)}}G(x,y,z)\right)^n,
  \]
  as required.
\end{proof}

By inspection we see that $G$ is continuous and bounded on 
\[
  X=\{(x, y, z)\in \R^3 \mid 0<x, y<1 \, \text{and} \, 0 < z < \min(x,y)\}.
\]
We can further extend the definition of $G$ to the closure $\overline X$. It
remains to find the maximum of $G$, which we do in the last lemma of this
section.

\begin{lem}\label{lem:bound-G}
  There exists $r \in (0, 1)$ such that $G(x, y, z) \leq r$ for all $x, y, z
  \in [0, 1]$ such that $z \leq \min(x, y)$.
\end{lem}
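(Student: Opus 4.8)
The plan is to collapse the three-variable optimisation to essentially a one-variable transcendental inequality by exploiting the product structure of $G$. The key observation is that $G$ factors as
\[
  G(x,y,z) = \phi(x,z)\,\phi(y,z)\,\psi(z), \qquad \phi(u,z) = \frac{u^{1-u}}{e^{u}(1-u)^{2(1-u)}(u-z)^{u-z}}, \quad \psi(z) = z^{1-2z}e^{-z}.
\]
For fixed $z$ the variables $x$ and $y$ are separated and $\phi,\psi\ge 0$, so $\max_{x,y}\phi(x,z)\phi(y,z) = \big(\max_{u}\phi(u,z)\big)^2$ and hence
\[
  \max_{\overline X} G = \max_{z}\Big(\max_{z\le u\le 1}\phi(u,z)\Big)^2\psi(z) = \Big(\max_{0\le z\le u\le 1} H(u,z)\Big)^2, \qquad H(u,z) := \frac{u^{1-u}\,z^{1/2-z}}{e^{u+z/2}(1-u)^{2(1-u)}(u-z)^{u-z}}.
\]
Thus it suffices to prove $\max_T H < 1$ on the triangle $T=\{(u,z): 0\le z\le u\le 1\}$, after which I take $r=(\max_T H)^2$. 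Pleasantly, this reduction needs no concavity, only nonnegativity of the factors.

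I would then dispose of the boundary of $T$ directly. On $z=0$ one has $H=0$, since the exponent $1/2-z$ of $z$ is positive there; on $u=1$ and on $z=u$ the function collapses to the explicit one-variable functions $z^{1/2-z}e^{-1-z/2}(1-z)^{-(1-z)}$ and $u^{3/2-2u}e^{-3u/2}(1-u)^{-2(1-u)}$, each of which is bounded comfortably below $1$ by the stationary-point analysis already used in Lemmas~\ref{lem:auxilary-func} and~\ref{lem:prob-approximation}. Hence the maximum is attained at an interior critical point, where $\nabla \log H = 0$.

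Writing out $\partial_z \log H = 0$ gives the clean relation $\log\frac{u-z}{z} = \tfrac12 - \tfrac{1}{2z}$, while $\partial_u\log H=0$ gives $\log(u-z) = \tfrac{1-u}{u} - \log u + 2\log(1-u)$. Substituting both relations back into $\log H$ and letting the linear and logarithmic terms collapse (in particular $\tfrac{(1-u)^2}{u}-u=\tfrac1u-2$) produces the closed form
\[
  \log H = \frac1u + \frac1{2z} - \frac12\log z - 3,
\]
valid at every interior critical point. Combined with the first relation, which makes $u$ an explicit function $u = z\big(1+e^{1/2-1/(2z)}\big)$ of $z$, the lemma reduces to showing that a single transcendental function of $z$ stays strictly negative at the critical $z$; the Lambert-$W$ function reappears when one solves for that root.

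The main obstacle is that the bound is extremely tight: numerically $\max_T H \approx 0.999$ and $\max G \approx 0.9986$, so the final inequality holds only by a razor-thin margin. Any crude estimate fails, and a rigorous conclusion requires controlling the transcendental critical point precisely — for instance by enclosing the root of $\partial_z\log H$ in a small interval and using monotonicity of the individual factors (in the spirit of the region-splitting and sign-of-derivative arguments of Lemma~\ref{lem:prob-approximation}) to certify $\log H<0$ there. Establishing this last strict inequality with a \emph{guaranteed} positive margin, rather than merely reading it off a plot, is where essentially all the difficulty of the lemma lies.
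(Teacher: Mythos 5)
Your reduction is correct and genuinely slicker than the paper's own argument: the factorization $G(x,y,z)=\phi(x,z)\,\phi(y,z)\,\psi(z)$ makes the diagonal symmetry $x=y$ automatic, whereas the paper has to prove $\alpha=\beta$ at stationary points by showing that $x \mapsto x - e^{x^{-1}}(1-x)^2/(ex)$ is injective. I have also checked your closed form at interior critical points: substituting $2\log(1-u)=\log(u-z)+\log u-\tfrac{1-u}{u}$ and $\log(u-z)=\log z+\tfrac12-\tfrac1{2z}$ into $\log H$ does collapse, via $\tfrac{(1-u)^2}{u}-u=\tfrac1u-2$, to $\log H=\tfrac1u+\tfrac1{2z}-\tfrac12\log z-3$, exactly as you claim; and $r=(\max H)^2$ settles the lemma once $\max H<1$ is known. (Your boundary claims are also true, although the one-variable functions on $u=1$ and $z=u$ are new ones, not literally those analysed in Lemmas~\ref{lem:auxilary-func} and~\ref{lem:prob-approximation}; their maxima are roughly $0.41$ and $0.73$ respectively, so routine calculus with comfortable margins disposes of them.)

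The genuine gap is the step you explicitly defer: certifying that $\tfrac1u+\tfrac1{2z}-\tfrac12\log z-3<0$ at the critical points. This cannot be waved through. The function $h(z)=\frac{1}{z\left(1+e^{1/2-1/(2z)}\right)}+\frac1{2z}-\frac12\log z-3$ obtained from your first relation is \emph{not} negative on all of $(0,1)$ --- it tends to $+\infty$ as $z\to0^+$ --- so you must control which $z$ actually satisfy the second critical equation as well. That requires proving that the combined transcendental equation has all its roots inside a known small interval (e.g.\ by a monotonicity argument on its left-hand side) and then evaluating $h$ there with certified error. This is precisely the content of the paper's treatment of~\eqref{equation:7}: the derivative $D(x)$ is shown to be negative for $x>0.587$, giving a unique root $\alpha\in(0.68152,0.68153)$, whence $\gamma\in(0.44403,0.44407)$ and $G(\alpha,\alpha,\gamma)<0.999$. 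Since the final margin is of order $10^{-3}$, this enclosure-plus-monotonicity step is the mathematical core of the lemma, and your proposal leaves it unexecuted: as it stands, you have reduced the lemma to its hardest part rather than proved it.
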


\begin{proof}
  First we establish the value of $G$ on the boundary $\overline{X}\setminus
  X$. Clearly for either $x=0$, $y = 0$ or $z = 0$ we have $G(x,y,z)=0$. If $x=1$
  \[
    G(1,y,z)=\frac{ y^{1-y}z^{1-2z}}{e^{1+y+z}(1-y)^{2(1-y)}(1-z)^{1-z}(y-z)^{y-z}}.
  \]
  By considering the derivative of $x^{-x}$, we can show that $x^{-x} \leq
  e^{e^{-1}}$ for all $x \in [0, 1]$. Hence 
  \[
    G(1,y,z)= e^{7 e^{-1} - 1} \cdot \frac{ y z}{e^{y + z}}.
  \]
  Also note that $x \to x e^{-x}$ is increasing on $[0, 1]$, and so $x e^{-x} \leq e^{-1}$.
  Therefore
  \[
    G(1, y, z) \leq e^{7e^{-1} - 3} \leq 0.7.
  \]
  By symmetry this also holds for $y=1$.
  
  Let $x=z$. Then
  \[
    G(x,y,x)=\frac{x^{2-3x} y^{1-y}}{e^{2x+y}(1-x)^{2(1-x)}(1-y)^{2(1-y)}(y-x)^{y-x}}.
  \]
  Similarly, 
  \[
    G(x,y,x) \leq e^{3e^{-1}} \left(\frac{x^{1 - x}}{e^x (1 - x)^{1 -x }}
    \right)^2 \cdot \frac{y^{1-y}}{e^{y}(1-y)^{1-y}}.
  \]
  By considering the derivatives of $x \to \frac{x^{1 - x}}{e^x (1 - x)^{1 -x
  }}$, we can show that the function has a unique maximum at $x_0 = \frac{1}{1 +
  \Omega}$. Hence after some algebraic manipulations we get
  \[
    \frac{x_0^{1 - x_0}}{e^{x_0} (1 - x_0)^{1 -x_0 }} = e^{- \frac{1}{1 +
    \Omega}} \cdot \Omega^{- \frac{\Omega}{1 + \Omega}}= e^{\Omega - 1},
  \]
  and so
  \[
    G(x, y, x) \leq e^{3(e^{-1} + \Omega - 1)} < 1.
  \]
  By symmetry the same holds for $y = z$. 

  The partial derivatives of $G$ are as follows
  \begin{align*}
    \frac{\partial G(x,y,z)}{\partial x} &= 
      \left(\log\left(\frac{(1 - x)^2}{x^2 - xz}\right) + \frac{1-x}{x}\right)
      G(x,y,z),\\
    \frac{\partial G(x,y,z)}{\partial y} &= 
      \left(\log\left(\frac{(1 - y)^2}{y^2 - yz}\right) + \frac{1-y}{y}\right)
      G(x,y,z),\\
    \frac{\partial G(x,y,z)}{\partial z} &= 
      \left(\log\left(\frac{(x - z)(y - z)}{z^2}\right) + \frac{1 - z}{z} 
      \right) G(x,y,z).
  \end{align*} 
  Suppose that $(\alpha, \beta, \gamma) \in (0,1]^3$ is a stationary point of
  $G(x, y, z)$. Note that $G(x, y, z) > 0$ if $x, y, z \neq 0$, and so
  \[
      \log\left(\frac{(1 - \alpha)^2}{\alpha^2 - \alpha \gamma}\right) +
      \frac{1-\alpha}{\alpha} = 0 
      \quad \text{and} \quad
      \log\left(\frac{(1 - \beta)^2}{\beta^2 - \beta \gamma}\right) +
      \frac{1-\beta}{\beta} = 0
  \]
  Hence
  \begin{equation}\label{equation:5}
    \gamma = \alpha - \frac{ e^{\alpha^{-1}}(1 - \alpha)^2}{e \alpha}
     = \beta - \frac{ e^{\beta^{-1}}(1 - \beta)^2}{e \beta}
  \end{equation}
  However, the function $x \to x - \frac{ e^{x^{-1}}(x - 1)^2}{e x}$ is
  increasing and thus injective, implying that $\alpha = \beta$. Hence all
  stationary points of $G(x, y, z)$ are of the form $(\alpha, \alpha, \gamma)$.

  Substituting $\alpha = \beta$ into $\partial G(x,y,z)/\partial z=0$ and
  rearranging we obtain that 
  \[
    \frac{1}{\gamma} + 2\log(\frac{\alpha - \gamma}{\gamma}) = 1. 
  \]
  Combining the above equation with \eqref{equation:5} we get that $\alpha$
  satisfies 
  \begin{equation}\label{equation:7}
    \frac{e \alpha}{e \alpha^2 -e^{\alpha^{-1}}(1 - \alpha)^2} + 2
    \log\left(\frac{e^{\alpha^{-1}}(1 - \alpha )^2}{e
    \alpha^2-e^{\alpha^{-1}}(1 - \alpha)^2}\right)=1.
  \end{equation}
  It can be shown that the derivative of the function given by the left
  hand side of the above equation is
  \[
    D(x) = - \frac{e \left( e x^3(x + 3) - e^{x^{-1}} (1 - x)^2 ( x^2 + 2x - 1)
    \right) }{(1-x) \left( ex^2 - e^{x^{-1}} ( 1 -x)^2 \right)^2}.
  \]

  Note that since the function in \eqref{equation:5} is increasing, and so if
  $\alpha \leq 0.587$, then
  \[
    \gamma \leq 0.587 - \frac{ e^{0.587^{-1}}(1 - 0.587)^2}{e \cdot 0.587} < 0,
  \] 
  which contradicts $\gamma \in (0, 1]$. Hence $\alpha > 0.587$. It is easy to
  see that $x \to e^{x^{-1}} (1 - x)^2$ is decreasing for $x \in (0, 1)$
  and that $x \to x^2 + 2x - 1$ is increasing for $x \geq -1$. Thus 
  \[
    e^{x^{-1}} (1 - x)^2 ( x^2 + 2x - 1) \leq 2 e^{0.587^{-1}} ( 1 - 0.587)^2   <
    1.88 
  \]
  for $x \in [0.587, 1]$. On the other hand, $x \to e x^3(x + 3)$ is
  increasing, and so for $x \geq 0.587$
  \[
    e x^3 (x + 3) \geq e \cdot 0.587^3 (0.587 + 3) > 1.97.
  \]
  Therefore
  \[
    e x^3(x + 3) - e^{x^{-1}} (1 - x)^2 ( x^2 + 2x - 1) > 0 
  \]
  for $x > 0.587$, and thus $D(x) < 0$, implying that the left handside of
  \eqref{equation:7} is strictly decreasing. Hence there is a unique value
  $\alpha$ satisfying the \eqref{equation:7}. Moreover, we can see by
  inspection that $0.68152 < \alpha < 0.68153$. Since \eqref{equation:5} is
  strictly increasing, it also follows that $0.44403 < \gamma < 0.44407$. Finally
  \begin{align*}
    G(\alpha, \alpha, \gamma) &=
      \frac{\alpha^{2(1-\alpha)} \gamma^{1-2\gamma}}{e^{2\alpha+\gamma}
      (1-\alpha)^{4(1-\alpha)} (\alpha-\gamma)^{2(\alpha-\gamma})} \\ 
    &\leq
      \frac{0.68153^{2 (1-0.68153)} 0.44407^{1 - 2 \cdot 0.44407}}{(1-0.68153)^{4
      (1 - 0.68152)} (0.68152 -0.44407)^{2 (0.68153\, -0.44403)} e^{2 \cdot
      0.68152+0.44403}} \\
    &< 
      0.999.
  \end{align*}
  Therefor there exists $r \in (0, 1)$ such that $G(x, y, z) \leq r$ for all $x, y, z
  \in [0, 1]$ such that $z \leq \min(x, y)$.
\end{proof}

The following is an immediate corollary of Lemmas~\ref{lem:bound-V}
and~\ref{lem:bound-G}, which concludes the proof of Theorems~\ref{thm:main}
and~\ref{thm:smallgen}.

\begin{cor}
  The probability $\mathbb{V}_n$, that $\rank(xyz) = \rank(y)$ where $x, y, z \in
  \mathcal{T}_n$ are chosen with uniform distribution, tends to $0$ as $n \to
  \infty$ exponentially fast.
\end{cor}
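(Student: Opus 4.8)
The plan is simply to chain together the two preceding lemmas, in exact analogy with how Corollary~\ref{cor:prob-k2} was deduced from Lemmas~\ref{lem:prob-equal-ranks} and~\ref{lem:prob-approximation}. Lemma~\ref{lem:bound-V} already reduces the asymptotics of $\mathbb{V}_n$ to the size of a single multivariable function: there is a constant $c > 0$ such that
\[
  \mathbb{V}_n \leq c n^5 \left( \max_{\substack{x,y,z \in (0,1]\\ z \leq \min(x,y)}} G(x,y,z) \right)^n.
\]
Lemma~\ref{lem:bound-G} in turn shows that this maximum is bounded away from $1$: there is $r \in (0,1)$ with $G(x,y,z) \leq r$ whenever $x,y,z \in [0,1]$ and $z \leq \min(x,y)$. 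Since the admissible region $(0,1]^3 \cap \{z \leq \min(x,y)\}$ appearing in Lemma~\ref{lem:bound-V} is contained in the region treated by Lemma~\ref{lem:bound-G}, the maximum occurring in the bound for $\mathbb{V}_n$ is also at most $r$.

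First I would substitute this into the bound from Lemma~\ref{lem:bound-V} to obtain $\mathbb{V}_n \leq c n^5 r^n$. The only remaining point is the routine observation that a fixed polynomial factor does not spoil exponential decay: since $r < 1$, pick any $r' \in (r,1)$ and write $c n^5 r^n = c n^5 (r/r')^n (r')^n$; because $r/r' < 1$ the factor $c n^5 (r/r')^n \to 0$, so $\mathbb{V}_n \leq (r')^n$ for all sufficiently large $n$. Hence $\mathbb{V}_n \to 0$ as $n \to \infty$ at an exponential rate, which is exactly the claim.

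There is essentially no obstacle here, as all of the analytic work has already been carried out in the two cited lemmas; this step is purely bookkeeping. Once $\mathbb{G}_n$, $\mathbb{T}_n$, and $\mathbb{V}_n$ are all known to tend to $0$ exponentially fast, Theorems~\ref{thm:main} and~\ref{thm:smallgen} follow from the lower bound $\mathbb{P}_k(n), \mathbb{W}_k(n) \geq 1 - k\mathbb{G}_n - k(k-1)\mathbb{T}_n - k(k-1)(k-2)\mathbb{V}_n$ established earlier via Lemma~\ref{lem:case-split}, Theorem~\ref{thm:ubiquitous}, and Corollary~\ref{cor:smallesgen}, since for fixed $k$ the right-hand side tends to $1$ exponentially fast.
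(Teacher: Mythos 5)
Your proposal is correct and matches the paper exactly: the paper treats this corollary as an immediate consequence of Lemmas~\ref{lem:bound-V} and~\ref{lem:bound-G}, which is precisely the chaining you carry out, with the polynomial-factor absorption $c n^5 r^n \leq (r')^n$ being the only (routine) detail the paper leaves implicit.
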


\end{document}